\documentclass[12pt]{amsart}

\usepackage[english]{babel}
\usepackage{mathrsfs, mathtools}
\usepackage{dsfont}
\usepackage{url}            
\usepackage{booktabs}       
\usepackage{amsfonts}       
\usepackage{nicefrac}       
\usepackage{microtype}      
\usepackage{lipsum}
\usepackage{enumitem}

\usepackage{a4wide}
\usepackage{color}
\usepackage{amssymb, amsmath}
\usepackage{mathrsfs}

\usepackage{float}
\usepackage{graphicx}

\usepackage{tikz}
\usepackage{tikz-cd}
\usetikzlibrary{snakes}
\usetikzlibrary{intersections, calc}
\usepackage{epsfig}
\usepackage[all]{xy}
\usepackage{epstopdf}
\usepackage{framed}

\usepackage{tikz-3dplot} 
\usepackage{subcaption} 

\newcounter{stmcounter}[section]

\newcounter{thmMaincounter}

\newtheorem{theorem}{Theorem}[section]
\newtheorem{lemma}[theorem]{Lemma}
\newtheorem{proposition}[theorem]{Proposition}
\newtheorem{corollary}[theorem]{Corollary}

\newtheorem{problem}[theorem]{Problem}
\newtheorem{theoremM}[thmMaincounter]{Theorem}

\theoremstyle{definition}
\newtheorem{definition}[theorem]{Definition}
\newtheorem{example}[theorem]{Example}

\theoremstyle{remark}
\newtheorem{remark}[theorem]{Remark}

\numberwithin{equation}{section}


\newcommand{\C}{{\mathbb{C}}}

\newcommand{\Z}{{\mathbb{Z}}}

\renewcommand{\P}{{\mathbb{P}}}

\newcommand{\Ro}{{\mathbb{R}}}

\newcommand{\lb}{\lbrace}
\newcommand{\rb}{\rbrace}
\renewcommand{\phi}{\varphi}

\newcommand{\algt}{\mathfrak{t}}

\DeclareMathOperator{\str}{star}

\DeclareMathOperator{\Hm}{Hom}

\begin{document}

\title[Borel-Hirzebruch type formula for GKM graph]{Borel-Hirzebruch type formula for the graph equivariant cohomology of a projective bundle over a GKM-graph}

\author[S.\ Kuroki]{Shintar\^o \textsc{Kuroki}}
\address{Department of Applied Mathematics \\
 Faculty of Science, Okayama University of Science \\
1-1 Ridai-Cho Kita-Ku Okayama-shi \\
Okayama 700-0005, Okayama, Japan}
\email{kuroki@ous.ac.jp}

\author[G.\ Solomadin]{Grigory \textsc{Solomadin}}
\address{
IRMA \\
University of Strasbourg, France}
\email{grigory.solomadin@gmail.com}

\subjclass[2020]{Primary 57S12; Secondary 55N91, 57R22}


\keywords{GKM graph, Equivariant cohomology, Equivariant vector bundle, Projective bundle}

\thanks{The first author was partially supported by JSPS KAKENHI Grant Number 21K03262. The second author was partly supported by RFBR grant, project number $20-01-00675$ A, and by the contest "Young Russian Mathematics".
This work was partially supported by the Research Institute for Mathematical Sciences an International Joint Usage/Research Center located in Kyoto University. }

\begin{abstract}
In this paper, we introduce the GKM theoretical counterpart of the equivariant complex vector bundles as the ``leg bundle''.
We also provide a definition for the projectivization of a leg bundle and prove
the Borel-Hirzebruch type formula for its graph equivariant cohomology, assuming that
the projectivization is again a GKM graph.
Furthermore, we study the realization of the projective GKM fiber bundle, in the sense of Guillemin-Sabatini-Zara, can be obtained from the projectivization of a leg bundle.
\end{abstract}

\maketitle

\section{Introduction}
\label{sect:1}

A \textit{GKM manifold} is defined as an equivariantly formal manifold $M^{2m}$ equipped with an action of a compact torus $T^n:=(S^1)^n$.
This action satisfies the condition that the set of $0$ and $1$-dimensional orbits forms the structure of a graph.
This particular class of manifolds was originally introduced by Goresky-Kottwitz-MacPherson in \cite{GKM}.
In \cite{GKM}, they provide a detailed description of
the $T$-equivariant cohomology ring of $M$ using the associated graph.
Furthermore, by Guillemin-Zara in \cite{GZ},
the concept of a GKM graph is introduced as an abstract graph with edges labeled by vectors in the dual of Lie algebra of $T^{n}$.
These labeled graphs have certain inherent properties that are analogous
to those observed in GKM manifolds.

The corresponding GKM graphs provide a useful tool for studying various properties of GKM manifolds, see e.g.~\cite{GZ, GKZ, GHZ, Ku16}.
On the other hand, abstract GKM graphs themselves are fascinating objects that have garnered attention beyond their geometric motivations, see e.g.~\cite{FY, FIM, Ku19, KU, MMP, S, Y}.
In \cite{KU}, the concept of a {\it GKM graph with legs} (non-compact edges) is introduced as a result of studying the combinatorial generalization of toric hyperK${\rm\ddot{a}}$hler manifolds,
which were previously studied in \cite{HP}.
In particular, the GKM graph with legs defined in \cite{KU} includes
the combinatorial counterpart of the cotangent bundle of $\mathbb{C}P^{n}$
with the extended $T^{n}$-action derived from the $T^{n}$-action on $\mathbb{C}P^{n}$.
However, the GKM graphs with legs in \cite{KU} do not encompass the counterparts of all equivariant vector bundles over GKM manifolds.
Therefore, in this paper, we introduce the concept of a {\it leg bundle} as the combinatorial counterpart to any torus-equivariant complex vector bundle over a GKM manifold.
We then study its properties and characteristics.

The projectivizations of
torus-equivariant vector bundles (with an effective torus action) over GKM manifolds constitute another interesting class of spaces endowed with torus actions.
For example, the well-known results such as the Leray-Hirsch theorem (see \cite[Theorem 4D.1, p.~432]{H}) and the Borel-Hirzebruch formula (see \cite[Chapter V, Section 15]{BH}) describe the cohomology module and algebra (respectively) of the projectivization of a complex vector bundle.
In a related vein, the equivariant Leray-Hirsch theorem for a $T$-equivariant fiber bundle (with both the base and fiber being GKM manifolds) was
establised from a GKM perspective in \cite{GSZ'}.
Motivated by this, 
this paper focuses on the concept of {\it projectivization} of a leg bundle and proving a {\it Borel-Hirzebruch type formula} for its graph equivariant cohomology ring.
The following theorem serves as the first main result of this paper.
\begin{theoremM}[Theorem~\ref{main_theorem2}]
\label{main_theorem_intro}
Let $\xi$ be a rank $r+1$ leg bundle over a GKM graph $\Gamma$.
Assume that its projectivization $\Pi(\xi)$ is a GKM graph.
Then, there is the following isomorphism of  $H^*(\Gamma)$-algebras:
\begin{align*}
H^{*}(\Gamma)[\kappa]\big/\biggl( \sum_{s=0}^{r+1} (-1)^{s}c_{s}^{T}(\xi) \cdot \kappa^{r+1-s} \biggr)\simeq H^{*}(\Pi(\xi)),\quad \quad \kappa\mapsto c_{\xi}.
\end{align*}
\end{theoremM}
In general, both torus-equivariant vector bundles and their projectivizations do not possess the property of pairwise linear independence around fixed points.
Therefore, it is necessary to impose an additional assumption in Theorem~\ref{main_theorem_intro}.

In addition, in Section~\ref{sect:6}, we also study the realization problem of the projective bundle from the vector bundle.
If we add some conditions for the projective bundle, for example, the condition for the connection or the condition for the axial functions, then we can solve the realization problem.
 
The organization of this paper is as follows.
In Section~\ref{sect:2}, we define a leg bundle $\xi$ over a GKM graph $\Gamma$.
In Section~\ref{sect:3}, the projectivization of $\Pi(\xi)$ of a leg bundle $\xi$ is introduced.
In Section~\ref{sect:4}, 
the Chern class $c_{s}^{T}(\xi)$ and the tautological class $c_{\xi}$ of $\xi$ are recalled for further use in the subsequent part of this note.
In Section~\ref{sect:5}, we prove Theorem~\ref{main_theorem_intro}, i.e., the Borel-Hirzebruch type formula for the graph equivariant cohomology.
In the final section, Section~\ref{sect:6}, we discuss relation between projective and projectivization topological bundles over GKM-manifolds from combinatorial and topological point of view.

\section{Leg bundle over a GKM graph}
\label{sect:2}

The aim of this section is to define a leg bundle over the GKM graph which is a combinatorial counterpart of the equivariant complex vector bundle over a GKM manifold.

\subsection{Leg bundle over an abstract graph}
\label{sect:2.1}

Let $\mathcal{V}$ be a set of vertices, and $\mathcal{E}$ be a set of (oriented and possibly multiple) edges in $G$.
We denote $G=(\mathcal{V},\mathcal{E})$.
Throughout this paper, we assume that every graph $G$ is connected and finite.
We use the following notations:
\begin{itemize}
\item for the finite set $X$, the symbol $|X|$ represents its cardinality;
\item $i(e)\in \mathcal{V}$ is the initial vertex for $e\in \mathcal{E}$;
\item $t(e)\in \mathcal{V}$ is the terminal vertex for $e\in \mathcal{E}$;
\item $\overline{e}\in \mathcal{E}$ is the opposite directed graph of $e\in \mathcal{E}$;
\item $\str_{G}(p):=\{e\in \mathcal{E}\ |\ i(e)=p\}$ is the set of out-going edges from $p\in \mathcal{V}$.
\end{itemize}
The graph $G=(\mathcal{V},\mathcal{E})$ is called a {\it (regular) $m$-valent graph} if $|\str_{G}(p)|=m$ for every $p\in \mathcal{V}$.

\begin{definition}
Let $G=(\mathcal{V},\mathcal{E})$ be a graph.
The following pair of sets is called a {\it rank $r$ leg bundle} over $G$:
\begin{align*}
[r]_{G}:=
(\mathcal{V}, \mathcal{E}\sqcup \mathcal{V}\times [r]),
\end{align*}
where $[r]:=\{1,\ldots,r\}$.
An element $(p,j)\in \mathcal{V}\times [r]$ is called a {\it leg} of $[r]_{G}$ over $p\in \mathcal{V}$.
The set of legs over $p$, i.e., $[r]_{p}:=\{(p,1),\ldots , (p,r)\}$ is called the {\it fiber} of $[r]_{G}$ over $p$.
\end{definition}

The rank $r$ leg bundle $[r]_{G}$ over $G$ may be regarded as the Cartesian product of the graph $G$ and the $1$-vertex graph having $r$ non-compact edges, called {\it legs}, see Figure~\ref{abstract_leg_bdl}.

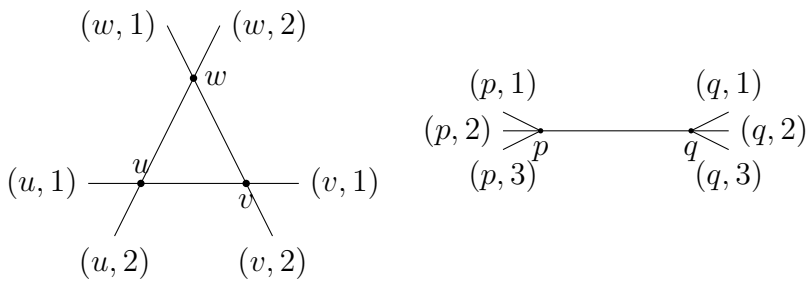
\begin{figure}[H]
\begin{center}
\begin{tikzpicture}
\begin{scope}[xscale=0.7, yscale=0.7]

\fill (-3,1) coordinate (w) circle (2pt);
\fill (-4,-1) coordinate (u) circle (2pt);
\fill (-2,-1) coordinate (v) circle (2pt);

\node[right] at (-3,1) {$w$};
\node[above] at (-4,-1) {$u$};
\node[below] at (-2,-1) {$v$};

\draw (u)--(v);
\draw (v)--(w);
\draw (w)--(u);

\draw (u)--(-5,-1);
\node[left] at (-5,-1) {$(u,1)$};
\draw (u)--(-4.5,-2);
\node[below] at (-4.5,-2) {$(u,2)$};

\draw (v)--(-1,-1);
\node[right] at (-1,-1) {$(v,1)$};
\draw (v)--(-1.5,-2);
\node[below] at (-1.5,-2) {$(v,2)$};

\draw (w)--(-3.5,2);
\node[left] at (-3.5,2) {$(w,1)$};
\draw (w)--(-2.5,2);
\node[right] at (-2.5,2) {$(w,2)$};

\end{scope}

\begin{scope}[xshift=100, xscale=0.5, yscale=0.5]
\fill (2,0) coordinate (q) circle (2pt);
\fill (-2,0) coordinate (p) circle (2pt);

\node[below] at (2,0) {$q$};
\node[below] at (-2,0) {$p$};

\draw (p)--(q);

\draw (p)--(-3,0);
\node[left] at (-3,0) {$(p,2)$};
\draw (p)--(-3,0.5);
\node[above] at (-3,0.5) {$(p,1)$};
\draw (p)--(-3,-0.5);
\node[below] at (-3,-0.5) {$(p,3)$};

\draw (q)--(3,0);
\node[right] at (3,0) {$(q,2)$};
\draw (q)--(3,0.5);
\node[above] at (3,0.5) {$(q,1)$};
\draw (q)--(3,-0.5);
\node[below] at (3,-0.5) {$(q,3)$};
\end{scope}
\end{tikzpicture}
\caption{The rank $2$ leg bundle over the triangle (left) and the rank $3$ leg bundle over the edge (right).}
\label{abstract_leg_bdl}
\end{center}
\end{figure}

\subsection{Leg bundle over a GKM graph}
\label{sect:2.2}

Let $G=(\mathcal{V}, \mathcal{E})$ be an $m$-valent graph.
We first recall the definition of a GKM graph $(G,\alpha,\nabla)$ which is originally defined in  \cite{GZ} (see e.g.~\cite{MMP, DKS} for a more general setting).
In this paper, we often use the following identification:
\begin{align*}
\mathbb{Z}^{n}\simeq (\mathfrak{t}^{n}_{\mathbb{Z}})^{*}\simeq {\rm Hom}(T^{n},S^{1})\simeq H^{2}(BT^{n})\subset H^{*}(BT^{n})\simeq \mathbb{Z}[x_{1},\ldots, x_{n}],
\end{align*}
where $(\mathfrak{t}^{n}_{\mathbb{Z}})^{*}$ is the lattice of $1$-parametric subgroups of $T^{n}$ and $\deg x_{i}=2$.

For $n\le m$,
any function $\alpha:\mathcal{E}\to (\mathfrak{t}^{n}_{\mathbb{Z}})^{*}$ satifying the following conditions (1)--(3) is called an {\it axial function}:
\begin{enumerate}
\item $\alpha(e)=\pm \alpha(\overline{e})$ for every edge $e\in\mathcal{E}$;
\item any two distinct elements in $\alpha(\str_{G}(p))=\{\alpha(e)\in (\mathfrak{t}^{n}_{\mathbb{Z}})^{*}\ |\ e\in \str_{G}(p)\}$ are linearly independent, i.e., \textit{pairwise linearly independent} (or {\it $2$-independent} for short), for every $p\in \mathcal{V}$; \item there is a bijection $\nabla_{e}:\str_{G}(i(e))\to \str_{G}(t(e))$ for every $e\in \mathcal{E}$ such that
\begin{enumerate}
\item $\nabla_{\overline{e}}=\nabla_{e}^{-1}$;
\item $\nabla_{e}(e)=\overline{e}$;
\item $\alpha(\nabla_{e}(e'))-\alpha(e')\equiv 0\mod \alpha(e)$ for every $e, e'\in \str_{G}(p)$.
\end{enumerate}
\end{enumerate}
The condition (3)-(c) is called a {\it congruence relation} on $e\in \mathcal{E}$.
The collection $\nabla=\{\nabla_{e}\ |\ e\in \mathcal{E}\}$ is called a {\it connection} on $(\Gamma,\alpha)$, and the bijection
$\nabla_{e}$ is also called a {\it connection} on the edge $e\in \mathcal{E}$.
The triple $(G,\alpha,\nabla)$ that satisfies these conditions is called a {\it GKM graph}, or an {\it $(m,n)$-type GKM graph} if we emphasize the valency of $\Gamma$ and the dimension of the target space of $\alpha$.

We next define the leg bundle over $\Gamma=(G,\alpha,\nabla)$.
\begin{definition}[Leg bundle over a GKM graph]
\label{def-leg-bdl}
Let $\Gamma=(G, \alpha, \nabla)$ be an $(m,n)$-type GKM graph.
We call $\xi$ a {\it (rank $r$) leg bundle} over the GKM graph $\Gamma$ if
the following data is given for $[r]_{G}$:
\begin{enumerate}
\item we assign the element $\xi_{p}^{j}\in (\mathfrak{t}_{\mathbb{Z}}^{n})^{*}$ to every leg $(p,j)$, called a {\it weight} on $(p,j)$;
\item there is the permutation $\sigma_{e}:[r]_{i(e)}\to [r]_{t(e)}$ for every edge $e\in \mathcal{E}$ that satisfies the following congruence relation:
\begin{align*}
\xi_{t(e)}^{\sigma_{e}(j)}-\xi_{i(e)}^{j}\equiv 0 \mod \alpha(e).
\end{align*}
\end{enumerate}
\end{definition}

We also call the collection $\sigma_{\xi}:=\{\sigma_{e}\ |\ e\in \mathcal{E}\}$ a {\it connection} on $\xi$.
A rank $1$ leg bundle over $\Gamma$ is called a {\it line bundle} over $\Gamma$.
For a line bundle $\xi$ over $\Gamma$, the connection $\sigma_{\xi}$ is uniquely determined.
By forgetting legs and their weights, we can define the projection $\pi:\xi\to \Gamma$, see Figure~\ref{figure_triangle}.

\subsection{Leg bundle induced from the vector bundle over a GKM manifold}
\label{sect:2.3}

In this section, we show how to obtain the leg bundle from the equivariant complex vector bundle over a GKM manifold.
For the definition of a GKM manifold see Section \ref{sect:6}; cf. \cite{GKM,GZ}.

Let $\pi\colon \xi\to M$ be a $T^{n}$-equivariant complex rank $r$ vector bundle with effective $T^n$-action over a GKM manifold $M$.
Recall that the collection of zero- and one-dimensional $T$-orbits of a GKM manifold $M$ form a graph, see e.g.~\cite{GZ, Ku16}.
Since $\xi$ is an equivariant vector bundle and $M$ has a non-empty fixed point set $M^{T}$,
the restriction $\xi_{p}$ of $\xi$ to any fiber over the $T$-fixed point $p\in M^{T}$ may be regarded as a $T$-representation.
This $T$-representation decomposes into the irreducible one-dimensional representations:
\begin{align}
\label{decomposition}
\xi_{p}\simeq V(\xi_{p}^{1})\oplus \cdots \oplus V(\xi_{p}^{r}),
\end{align}
where $V(\xi_{p}^{j})$ is the complex one-dimensional $T$-representation space $\xi_{p}^{j}\in \Hm(T, S^{1})\simeq \algt_{\mathbb{Z}}^{*}$ for $j=1,\ldots, r$.
Notice that the subspace of zero and one-dimensional orbits in the $T$-orbit space of $\xi_{p}$ might not be a graph, because in general $\xi_{p}^{1},\ldots, \xi_{p}^{r}$ in $\algt^{*}_{\mathbb{Z}}\simeq \mathbb{Z}^{n}$ are not pairwise linearly independent.
However, the orbit space of each factor in \eqref{decomposition} is  $V(\xi_{p}^{j})/T^{n}\cong \Ro_{+}=\lb x\in \Ro |\ x\geq 0\rb$ (homeomorphic to a half-line).
This leads us to define the non-compact edge (i.e., leg) with the label $\xi_{p}^{j}$ over the GKM graph of $M$.

\begin{example}
Figure~\ref{figure_triangle} illustrates the leg bundle defined by the $T^{2}$-action on the tangent bundle $T\mathbb{C}P^{2}$ over $\mathbb{C}P^{2}$ with the standard $T^{2}$-action.
\begin{figure}[H]
\centering
\begin{tikzpicture}
\begin{scope}[xscale=1, yscale=1]
\fill (0,1) circle (1pt);
\fill (1,-1) circle (1pt);
\fill (-1,-1) circle (1pt);

\draw (0,1)--(1,-1);
\draw (0,1)--(0.5,2);
\node[right] at (0.5, 2) {$\xi_{w}^{2}=-x_{2}$};
\draw (0,1)--(-0.5,2);
\node[left] at (-0.5, 2) {$\xi_{w}^{1}=x_{1}-x_{2}$};

\draw (-1,-1)--(0,1);

\draw (1,-1)--(2,-1);
\node[above] at (2,-1) {$\xi_{v}^{1}=-x_{1}$};
\draw (1,-1)--(1.5,-2);
\node[right] at (1.5, -2) {$\xi_{v}^{2}=x_{2}-x_{1}$};

\draw (1,-1)--(-1,-1);
\draw (-1,-1)--(-2,-1);
\node[above] at (-2, -1) {$\xi_{u}^{1}=x_{1}$};
\draw (-1,-1)--(-1.5,-2);
\node[right] at (-1.5, -2) {$\xi_{u}^{2}=x_{2}$};

\draw[->] (2.5, 0)--(3, 0);
\node[above] at (2.75, 0) {$\pi$};

\fill (5,1) circle (1pt);
\fill (6,-1) circle (1pt);
\fill (4,-1) circle (1pt);

\node[left] at (4,-1) {$u$};
\node[below] at (4.5, -1.1)
{$x_{1}$};
\node[above] at (4, -0.3)
{$x_{2}$};

\node[above] at (5,1) {$w$};
\node[right] at (5.2, 0.8) {$x_{1}-x_{2}$};

\node[right] at (6,-1) {$v$};

\draw (4,-1)--(6,-1);
\draw[->] (4,-1)--(5,-1);
\node[above] at (5, -1) {$e_{1}$};

\draw (4,-1)--(5,1);
\draw[->] (4,-1)--(4.5,0);
\node[right] at (4.5, 0) {$e_{2}$};

\draw (5,1)--(6,-1);
\draw[->] (5,1)--(5.5,0);
\node[right] at (5.5, 0) {$e_{3}$};

\end{scope}
\end{tikzpicture}

\caption{The right graph $\Gamma=(G,\alpha,\nabla)$ is the GKM graph satisfying $\alpha(\overline{e})=-\alpha(e)$, where $\alpha(e_{1})=x_{1}, \alpha(e_{2})$ and $\alpha(e_{3})=x_{1}-x_{2}$.
The left labeled graph $\xi$ is the rank $2$ leg bundle over $\Gamma$ (see the left leg bundle in Figure~\ref{abstract_leg_bdl}), where
the connection $\sigma_{\xi}$ is uniquely determined.
It is well-known that the right GKM graph corresponds to the standard $T^{2}$-action on $\mathbb{C}P^{2}$. By computing the tangential representations of the tangent space $T\mathbb{C}P^{2}$ over the fixed points, we obtain the left leg bundle with labels $\xi_{p}^{k}$ for $p=u,v,w$, where $k=1,2$.}
\label{figure_triangle}
\end{figure}
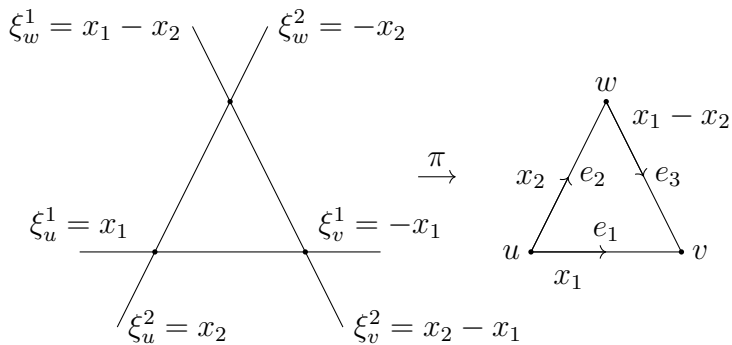
\end{example}

\section{Projectivization of a leg bundle}
\label{sect:3}

Let $\Gamma=(G,\alpha,\nabla)$ be an $(m,n)$-type GKM graph and $\xi$ be any rank $(r+1)$ leg bundle over $\Gamma$.
In this section, we introduce the projectivization $\Pi(\xi)=(P(\xi),\alpha^{P(\xi)},\nabla^{P(\xi)})$ of $\xi$.

\subsection{Vertices and edges}
\label{sect:3.1}
We first introduce the underlying graph of the projectivization $\Pi(\xi)$, say $P(\xi):=(\mathcal{V}^{P(\xi)},\mathcal{E}^{P(\xi)})$.

The set of vertices $\mathcal{V}^{P(\xi)}$ is defined by the set of legs on $[r+1]_{G}$, i.e., set-theoretically,
\begin{align*}
\mathcal{V}^{P(\xi)}:=\bigcup_{p\in \mathcal{V}}[r+1]_{p}=\{(p,l)\ |\ l\in [r+1],\ p\in \mathcal{V}\}.
\end{align*}
The set of edges $\mathcal{E}^{P(\xi)}$ is defined by the set of the following two types of edges:
\begin{description}
\item[vertical]
a {\it vertical edge} $(p,jk)$ connecting two vertices $(p,j),\ (p,k)\in [r+1]_{p}$ if $j\not=k$, where $p$ runs over $\mathcal{V}$ and $j,k$ run over $[r+1]_{p}$ with $j\not=k$;
\item[horizontal]
a {\it horizontal edge} $(e,l)$ for $e\in \mathcal{E}$ and $l\in [r+1]_{i(e)}$ connecting $(i(e),l)$ and $(t(e),\sigma_{e}(l))$.
\end{description}
From this definition, set-theoretically,
\begin{align*}
\mathcal{E}^{P(\xi)}=\left(\bigcup_{p\in \mathcal{V}}\{(p,jk)\ |\ j,k\in [r+1], i\not=j\}\right)\cup
\left(\bigcup_{e\in \mathcal{E}}\{(e,l)\ |\ l\in [r+1]\} \right)
\end{align*}

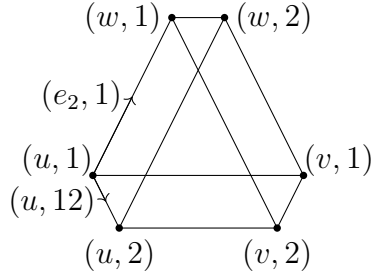
\begin{figure}[H]
\centering
\begin{tikzpicture}
\begin{scope}[xscale=0.7, yscale=0.7]

\fill (-5,-1) coordinate (u1) circle (2pt);
\node[left] at (-4.8,-0.7) {$(u,1)$};
\fill (-4.5,-2) coordinate (u2) circle (2pt);
\node[below] at (-4.5,-2) {$(u,2)$};

\fill (-1,-1) coordinate (v1) circle (2pt);
\node[right] at (-1.2,-0.7) {$(v,1)$};
\fill (-1.5,-2) coordinate (v2) circle (2pt);
\node[below] at (-1.5,-2) {$(v,2)$};

\fill (-3.5,2) coordinate (w1) circle (2pt);
\node[left] at (-3.5,2) {$(w,1)$};
\fill (-2.5,2) coordinate (w2) circle (2pt);
\node[right] at (-2.5,2) {$(w,2)$};

\draw[->] (u1)--(-4.75,-1.5);
\node[left] at (-4.75,-1.5) {$(u,12)$};

\draw[->] (u1)--(-4.25,0.5);
\node[left] at (-4.25,0.5) {$(e_{2},1)$};

\draw (u1)--(u2);
\draw (v1)--(v2);
\draw (w1)--(w2);

\draw (u1)--(v1);
\draw (u2)--(v2);
\draw (v2)--(w1);
\draw (v1)--(w2);
\draw (w1)--(u1);
\draw (w2)--(u2);

\end{scope}
\end{tikzpicture}
\caption{The projectivization $P(\xi)$ of the leg bundle $\xi$ in Figure~\ref{figure_triangle}. Here, $(u,12)$ is the vertical edge connecting $(u,1)$ and $(u,2)$ and $(e_{2},1)$ is the horizontal edge connecting $(u,1)$ and $(w,1)$.}
\label{abstract_projectivization}
\end{figure}

Note that
the reversed orientation edge of the vertical edge $(p,jk)$ is $\overline{(p,jk)}=(p,kj)$
and that of the horizontal edge $(e,l)$ is
$\overline{(e,l)}=(\overline{e},\sigma_{e}(l))$ (also see Definition~\ref{def-leg-bdl}).

\subsection{Label of the projectivization}
\label{sect:3.2}
The {\it label} $\alpha^{P(\xi)}:\mathcal{E}^{P(\xi)}\to (\mathfrak{t}_{\mathbb{Z}}^{n})^{*}$ of the projectivization $\Pi(\xi)$ is defined as follows:
\begin{itemize}
\item $\alpha^{P(\xi)}(p,jk):=\xi_{p}^{k}-\xi_{p}^{j}$, for any vertical edge $(p,jk)\in \mathcal{E}^{P(\xi)}$;
\item $\alpha^{P(\xi)}(e,l):=\alpha(e)$, for any horizontal edge $(e,l)\in \mathcal{E}^{P(\xi)}$.
\end{itemize}

\begin{example}
In Figure~\ref{abstract_projectivization} (also see Figure~\ref{figure_triangle}), for the vertical edge $(u,12)$ and the horizontal edge $(e_{2},1)$, we have
\begin{align*}
& \alpha^{P(\xi)}(u,12)=\xi_{u}^{2}-\xi_{u}^{1}=x_{2}-x_{1};\\
& \alpha^{P(\xi)}(e_{2},1)=\alpha(e_{2})=x_{2}.
\end{align*}
\end{example}

\subsection{The standard connection of the projectivization}
\label{sect:3.3}

In this section, we introduce the {\it standard connection} of the projectivization of $\xi$, denoted by
$\nabla^{P(\xi)}:=\{\nabla_{\epsilon}^{P(\xi)}\ |\ \epsilon\in \mathcal{E}^{P(\xi)}\}$
for $P(\xi)$, and show that it gives the connection on $(P(\xi),\alpha^{P(\xi)})$ in  Theorem~\ref{cong_rel_for_canonical_connection}.
The connection $\nabla^{P(\xi)}$ is defined by the set of the bijective maps
\begin{align*}
\nabla_{\epsilon}^{P(\xi)}:\str_{P(\xi)}(i(\epsilon))\to \str_{P(\xi)}(t(\epsilon)).
\end{align*}
such that
\begin{itemize}
\item $\nabla_{(u,jk)}^{P(\xi)}(u,jl)=(u,kl)$ for every distinct elements $j, k, l\in [r+1]$;
\item $\nabla_{(u,jk)}^{P(\xi)}(e,j)=(e,k)$, where $i(e)=u\in \mathcal{V}$;
\item $\nabla_{(e,l)}^{P(\xi)}(u,lk)=(v,\sigma_{e}(l)\sigma_{e}(k))$, where $i(e)=u, t(e)=v\in \mathcal{V}$ for every distinct elements $l, k\in [r+1]$;
\item $\nabla_{(e,l)}^{P(\xi)}(e',l)=(\nabla_{e}(e'),\sigma_{e}(l))$, where $i(e)=i(e')\in \mathcal{V}$,
\end{itemize}
where we omit $\nabla_{\epsilon}^{P(\xi)}(\epsilon)=\overline{\epsilon}$.

We have the following theorem which is straightforward to prove.
\begin{theorem}
\label{cong_rel_for_canonical_connection}
The collection $\nabla^{P(\xi)}:=\{\nabla_{\epsilon}^{P(\xi)}\ |\ \epsilon\in \mathcal{E}^{P(\xi)}\}$ satisfies the conditions of the connection on $(P(\xi),\alpha^{P(\xi)})$.
\end{theorem}

We call $\nabla^{P(\xi)}$ the {\it standard connection} on $(P(\xi),\alpha^{P(\xi)})$.

\begin{remark}
If $\alpha^{P(\xi)}$ is $2$-independent (see (2) in the conditions of the axial function in Section~\ref{sect:2.2}), then $\Pi(\xi)$ is a GKM graph, and $\Pi(\xi)\to \Gamma$ is a GKM fiber bundle in the sense of \cite{GSZ} (see Definition~\ref{defn:gsz1} in
Section~\ref{sect:6}).
\end{remark}

\subsection{From geometry to combinatorics}
\label{sect:3.4}
Let $\pi:\P(\xi)\to M$ be the projectivization of a $T$-equivariant complex rank $r+1$ vector bundle $\xi\to M$ over a GKM manifold $M$ with a $T$-action, where $T=(S^1)^n$ (see e.g.~\cite[Section 3.1]{Ku10}).
One has the isomorphism of complex vector bundles
\[
T\P(\mathcal{F})\oplus\mathbb{C}\cong (\pi^{*}\mathcal{F}\otimes \gamma)\oplus \pi^{*} TM,
\]
where $\gamma\to\P(\xi)$ denotes the tautological line bundle for $\pi$, and $\mathbb{C}$ denotes the trivial line bundle.
This isomorphism is equivariant with respect to the $T$-action that is trivial on the line summand $\mathbb{C}$.
Therefore, the labels on the horizontal edges are given by those on the respective edges of the GKM-graph for $M$, and it remains to determine the labels of the vertical edges.
Let $p\in M^{T}$.
In this case, the decomposition \eqref{decomposition} becomes the following irreducible decomposition:
\begin{align*}
\xi_p\simeq V(\xi_{p}^{1})\oplus \cdots \oplus V(\xi_{p}^{r+1}).
\end{align*}
The $T$-action on $\xi$ induces the $T$-action on the projectivization $\P(\xi)$.
Therefore, by restricting this action on $p\in M^{T}$, one can define the $T$-action on the projectivization
$\P_{p}(\xi)=\pi^{-1}(p)\cong \P(\xi_p)\cong \mathbb{C}P^{r}$ of the fiber $\xi_p$.
If $\{\xi_{p}^{j}-\xi_{p}^{k}\ |\ j,k\in [r+1]\}$ satisfies the $2$-independence condition for every $p\in M^{T}$, then $\P(\xi)$ is a GKM manifold.
In such case, it is easy to check that the GKM graph of $\P(\xi)$ is $\Pi(\xi)=(P(\xi),\alpha^{P(\xi)}, \nabla^{P(\xi)})$.

\section{Graph equivariant cohomology algebra and its elements}
\label{sect:4}

Let $\Gamma=(G,\alpha,\nabla)$ be a GKM graph.
By definition, the {\it graph equivariant cohomology} (see \cite{GZ}) is the graded $H^{*}(BT)$-subalgebra
\begin{align*}
H^{*}(\Gamma):=\{f:\mathcal{V}\to H^{*}(BT^{n})\ |\ f(i(e))-f(t(e))\equiv 0\mod \alpha(e)\},
\end{align*}
of $\oplus_{u\in \mathcal{V}}H^{*}(BT^{n})$.
It is well known that $H^{*}(\Gamma)$ is isomorphic to $H_{T}^{*}(M)$ of the GKM manifold $M$ with $T$-action under some conditions (see e.g. \cite{GKM,GZ}, \cite[Theorem 2.12]{DKS}).

In this section, we consider some elements in $H^{*}(\Gamma)$ being motivated by the pull-back of the equivariant Chern classes to the fixed points of the torus action on a manifold.
Notice that a similar notion for the GKM graphs (which satisfy $\alpha(e)=-\alpha(\overline{e})$ for every edge $e$) has already been discussed in some papers (e.g., in \cite{GKZ20, Y} about the combinatorial counterpart of the equivariant Chern classes of the invariant almost complex tangent bundle. Also see \cite{P08} for the toric manifolds).

\subsection{Chern classes}
\label{sect:4.1}
Let $\xi$ be a rank $r+1$ leg bundle over $\Gamma$.
For $0\le s\le r+1$,
the {\it $s$-th (equivariant) Chern class} of $\xi$ is the map
\begin{align*}
c_{s}^{T}(\xi)\colon \mathcal{V}\to H^{2s}(BT^{n})
\end{align*}
defined as follows:
\begin{align*}
c_{s}^{T}(\xi)(u):=\mathfrak{S}_{s}(\xi_{u}^{1},\xi_{u}^{2},\ldots, \xi_{u}^{r+1}),
\end{align*}
where
\begin{align*}
\mathfrak{S}_{s}(x_{1},x_{2},\ldots, x_{r+1}):=\sum_{\substack{a_{1}+a_{2}+\cdots+a_{r+1}=s, \\ 0\le a_{j}\le 1}}x_{1}^{a_{1}}\cdot x_{2}^{a_{2}}\cdots x_{r+1}^{a_{r+1}}
\end{align*}
is the elementary symmetric function of degree $s$.
The $s$-th Chern class is an element of the graph equivariant cohomology.
Namely, we have the following lemma.
\begin{lemma}
\label{Chern_class}
One has
$c_{s}^{T}(\xi)\in H^{2s}(\Gamma)$.
\end{lemma}
\begin{proof}
We shall check the congruence relation holds for all edges $e\in \mathcal{E}$.
Let $i(e)=p$, $t(e)=q\in \mathcal{V}$.
By the definition of the leg bundle, there exists an integer $d_{j}$ for every $j=1,\ldots, r+1$ such that
\begin{align*}
\xi_{q}^{\sigma_{e}(j)}=\xi_{p}^{j}+d_{j}\alpha(e),
\end{align*}
where $\sigma_{e}:[r+1]\to [r+1]$ is the connection on $e$.
Thus, by the definition of the $k$-th Chern class, we have that
\begin{align*}
c_{s}^{T}(\xi)(p)-c_{s}^{T}(\xi)(q)=&\mathfrak{S}_{s}(\xi_{p}^{1},\ldots, \xi_{p}^{r+1})-\mathfrak{S}_{s}(\xi_{q}^{1},\ldots, \xi_{q}^{r+1}) \\
=&\mathfrak{S}_{s}(\xi_{p}^{1},\ldots, \xi_{p}^{r+1})-\mathfrak{S}_{s}(\xi_{p}^{1}+d_{1}\alpha(e),\ldots, \xi_{p}^{r+1}+d_{r+1}\alpha(e)) \\
\equiv & \mathfrak{S}_{s}(\xi_{p}^{1},\ldots, \xi_{p}^{r+1})-\mathfrak{S}_{s}(\xi_{p}^{1},\ldots, \xi_{p}^{r+1}) \mod \alpha(e) \\
= & 0.
\end{align*}
This establishes the statement.
\end{proof}

\begin{remark}
\label{classification_line_bundles}
It follows easily from Definition~\ref{def-leg-bdl} that
there is a one-to-one correspondence between the set of
line bundles over the GKM graph $\Gamma$ and $H^{2}(\Gamma)$ by taking the first Chern class of the line bundle and conversely assigning the value of the function in $H^{2}(\Gamma)$ on $p\in \mathcal{V}$ to the unique leg on $p$.
\end{remark}

\subsection{The tautological class}
\label{sect:4.2}
Let $\Pi(\xi)=(P(\xi),\alpha^{P(\xi)}, \nabla^{P(\xi)})$ be the projectivizaiton of $\xi$.
From now on, we suppose that $\Pi(\xi)$ is a GKM graph.

Define the following function:
\begin{align*}
c_{\xi}:\mathcal{V}^{P(\xi)}\to H^{2}(BT^{n}),\quad c_{\xi}(u,l):=\xi_{u}^{l}.
\end{align*}
Then, we have the following lemma (the proof is straightforward).
\begin{lemma}
\label{tautological_class}
One has
$c_{\xi}\in H^{2}(\Pi(\xi))$.
\end{lemma}
We call this class $c_{\xi}$ the {\it tautological class} of $\Pi(\xi)$.
We call the line bundle which corresponds to $c_{\xi}\in H^{2}(\Pi(\xi))$ the {\it tautological line bundle} of $\xi$ over $\Pi(\xi)$ (see Remark~\ref{classification_line_bundles}).

\begin{remark}
Geometrically, $c_{\xi}$ corresponds to the first Chern class of the tautological line bundle of $\P(\xi)$ (see \cite{GH}).
\end{remark}

\section{Combinatorial Borel-Hirzebruch formula and Leray-Hirsch theorem}
\label{sect:5}

In this section, we prove the main theorem of this paper, see Theorem~\ref{main_theorem2}.

\subsection{The injective homomorphism $\varphi$}
\label{sect:5.1}

We first define the homomorphism $\varphi:H^{*}(\Gamma)\to H^{*}(\Pi(\xi))$.
For an element $f\colon \mathcal{V}\to H^{*}(BT^{n})\in H^{*}(\Gamma)$,
the map $\varphi(f)\colon \mathcal{V}^{P(\xi)}\to H^{*}(BT^{n})\in H^{*}(\Pi(\xi))$ is defined by
\begin{align}
\label{embedding}
\varphi(f)(u,l):=f(u).
\end{align}
We have the following straightforward lemma.
\begin{lemma}
\label{induced_hom}
The induced map $\varphi\colon H^{*}(\Gamma)\longrightarrow H^{*}(\Pi(\xi))$ is an injective homomorphism.
\end{lemma}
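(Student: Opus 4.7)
My plan is to verify three things in sequence: that $\varphi^{*}$ lands in $H^{*}(\mathcal{P}(\xi))$, that it is a ring homomorphism, and that it is injective. The second point is essentially automatic since sums and products in both $H^{*}(\mathcal{G})$ and $H^{*}(\mathcal{P}(\xi))$ are computed vertex-wise, and pulling back commutes with vertex-wise operations. So the substance of the proof is the first and third points.

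For well-definedness, given $f\in H^{*}(\mathcal{G})$, I need to check the congruence relation from Definition~\ref{def-graph-eq-cohom} at every edge $e\in \mathcal{E}^{P(\xi)}$, splitting into vertical and horizontal cases according to Construction~\ref{constr:projgr}. If $e=e_{p}^{i,j}$ is vertical, then its two endpoints are $l_{p}^{i}$ and $l_{p}^{j}$, both of which lie over the same vertex $p\in \mathcal{V}$, so by \eqref{embedding} we have $\varphi^{*}(f)(l_{p}^{i})-\varphi^{*}(f)(l_{p}^{j})=f(p)-f(p)=0$, which is trivially divisible by $\alpha^{\mathcal{P}(\xi)}(l_{p}^{i},e_{p}^{i,j})$. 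If $e=e_{f}^{i,j}$ is horizontal with $(p,f),(q,f)\in \mathcal{F}$ and $\nabla^{\xi}_{(p,f)}(l_{p}^{i})=l_{q}^{j}$, then the endpoints are $l_{p}^{i}$ and $l_{q}^{j}$, and by \eqref{axial_fct_proj} the label is $\alpha^{\mathcal{P}(\xi)}(l_{p}^{i},e_{f}^{i,j})=\alpha(p,f)$. Therefore the required congruence $\varphi^{*}(f)(l_{p}^{i})-\varphi^{*}(f)(l_{q}^{j})=f(p)-f(q)\equiv 0\mod \alpha(p,f)$ is exactly the congruence satisfied by $f$ along the flag $(p,f)\in \mathcal{F}$ of $\mathcal{G}$.

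For injectivity, suppose $\varphi^{*}(f)=0$ for some $f\in H^{*}(\mathcal{G})$. Since $\xi$ has rank $r+1\ge 1$, every vertex $p\in \mathcal{V}$ has at least one leg $l_{p}^{1}\in \xi_{p}\subseteq \mathcal{V}^{P(\xi)}$, and \eqref{embedding} gives $f(p)=\varphi^{*}(f)(l_{p}^{1})=0$. As $p$ was arbitrary, $f\equiv 0$.

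I do not expect any serious obstacle: the definitions are rigged precisely so that the horizontal edges of $\mathcal{P}(\xi)$ inherit the labels of $\mathcal{G}$ and the vertical edges are collapsed to points by $\varphi$, which makes the congruence check immediate. The only mild subtlety is to remember that $\alpha^{\mathcal{P}(\xi)}$ takes two genuinely different forms on vertical and horizontal edges, and to handle both cases separately, but in both cases the congruence reduces to something trivial or to a condition already built into $f\in H^{*}(\mathcal{G})$.
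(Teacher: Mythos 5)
Your proof is correct and is precisely the routine verification that the paper leaves to the reader (the paper states only ``It is easy to prove the following lemma'' and gives no argument): the vertical/horizontal case split for the congruence check, the vertex-wise ring structure, and the observation that each fibre $\xi_{p}$ is nonempty for injectivity are exactly the intended steps.
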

Notice that $H^{*}(\Pi(\xi))$ is an $H^{*}(\Gamma)$-algebra with respect to the homomorphism $\phi$.
By a slight abuse of the notation, we identify $H^{*}(\Gamma)$ with its image in $H^{*}(\Pi(\xi))$ by using Lemma~\ref{induced_hom}.
In particular, we may regard the $s$-th Chern class $c_{s}^{T}(\xi)\in H^{*}(\Gamma)$ of the leg bundle $\xi$ as an element of $H^{*}(\Pi(\xi))$.

\subsection{Main theorem and preparation}
\label{sect:5.2}

Now we may state the main theorem of this paper.
\begin{theorem}
\label{main_theorem2}
Let $\xi$ be a rank $r+1$ leg bundle over a GKM graph $\Gamma$.
Assume that its projectivization $\Pi(\xi)$ is a GKM graph.
Then, there is the following isomorphism of  $H^*(\Gamma)$-algebras:
\begin{align*}
H^{*}(\Gamma)[\kappa]\big/\biggl( \sum_{s=0}^{r+1} (-1)^{s}c_{s}^{T}(\xi) \cdot \kappa^{r+1-s} \biggr)\simeq H^{*}(\Pi(\xi))\quad s.t.\quad \kappa\mapsto c_{\xi}.
\end{align*}
\end{theorem}
The purpose of this section is to prove Theorem~\ref{main_theorem2}.

To do that, we first put
\begin{itemize}
\item $t:=c_{\xi}\in H^*(\Pi(\xi))$,
\item $c_{s}:=c_{s}^{T}(\xi)\in H^*(\Gamma)\subset H^*(\Pi(\xi))$, where $s=0,\dots,r+1$.
\end{itemize}
Consider the following map:
\begin{align}
\label{the_map}
&\mu\colon H^{*}(\Gamma)[\kappa]/( \sum_{s=0}^{r+1} (-1)^{s}c_{s} \kappa^{r+1-s}) \longrightarrow H^{*}(\Pi(\xi)), \nonumber\\
&\mu\left(\sum_{i=0}^{n}f_{i}\kappa^{i}\right):=\sum_{i=0}^{n}f_{i} t^{i},
\end{align}
where $f_{i}\in H^{*}(\Gamma)$.
We first prove the following lemma:
\begin{lemma}
\label{well-defined}
The map $\mu$ is a well-defined homomorphism of $H^*(\Gamma)$-algebras.
\end{lemma}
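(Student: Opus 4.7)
The plan is to exhibit $\mu$ as the composite of two morphisms: first, the tautological $H^{*}(\mathcal{G})$-algebra homomorphism
\[
\widetilde{\mu}\colon H^{*}(\mathcal{G})[\kappa]\longrightarrow H^{*}(\mathcal{P}(\xi)),\qquad \kappa\longmapsto t,
\]
which exists because $H^{*}(\mathcal{P}(\xi))$ is an $H^{*}(\mathcal{G})$-algebra via the injection $\varphi^{*}$ of Lemma~\ref{induced_hom} and $t=c_{1}^{T}(\gamma)$ is a bona fide element of $H^{*}(\mathcal{P}(\xi))$ by Proposition~\ref{tautological_line_bundle_satisfies_congrel} together with Lemma~\ref{well-definedness_equivariant_Chern_class}. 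Second, I would check that $\widetilde{\mu}$ descends to the claimed quotient, i.e.\ that
\[
P:=\sum_{k=0}^{r+1}(-1)^{k}c_{k}^{T}(\xi)\cdot t^{r+1-k}\ =\ 0\qquad\text{in }H^{*}(\mathcal{P}(\xi)).
\]

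Since $H^{*}(\mathcal{P}(\xi))$ sits inside $\bigoplus_{v\in \mathcal{V}^{P(\xi)}}H^{*}(BT^{n})$ by Definition~\ref{def-graph-eq-cohom}, it suffices to verify $P(l_{p}^{i})=0$ for every vertex $l_{p}^{i}\in \mathcal{V}^{P(\xi)}$. Set $x_{j}:=\alpha^{\xi}(p,l_{p}^{j})$ for $j=1,\ldots,r+1$. By \eqref{embedding} and the definition of $c_{k}^{T}(\xi)$ in Definition~\ref{defn:Cherncl}, we have
\[
c_{k}^{T}(\xi)(l_{p}^{i})=c_{k}^{T}(\xi)(p)=\mathfrak{S}_{k}(x_{1},\ldots,x_{r+1}),
\]
while by Definition~\ref{def_tautological_line_bdl} we have $t(l_{p}^{i})=\alpha^{\xi}(p,l_{p}^{i})=x_{i}$. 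Hence
\[
P(l_{p}^{i})=\sum_{k=0}^{r+1}(-1)^{k}\mathfrak{S}_{k}(x_{1},\ldots,x_{r+1})\,x_{i}^{\,r+1-k}.
\]

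The final step is purely algebraic: this last sum is precisely the univariate polynomial $\prod_{j=1}^{r+1}(z-x_{j})$ evaluated at $z=x_{i}$, which vanishes because one of its factors is $x_{i}-x_{i}=0$. Therefore $P$ vanishes vertex-wise and is the zero element of $H^{*}(\mathcal{P}(\xi))$, so $\widetilde{\mu}$ factors through the quotient and yields the desired $\mu$. The map is an $H^{*}(\mathcal{G})$-algebra homomorphism by construction. There is no real obstacle here — the only thing one must be careful about is that the identification of $c_{k}^{T}(\xi)\in H^{*}(\mathcal{G})$ with its image in $H^{*}(\mathcal{P}(\xi))$ under $\varphi^{*}$ is consistent with evaluating at any leg $l_{p}^{i}$ lying over $p$, which is immediate from \eqref{embedding}.
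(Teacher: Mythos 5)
Your proof is correct and follows essentially the same route as the paper: both reduce the well-definedness to the vertex-wise identity $\sum_{k=0}^{r+1}(-1)^{k}\mathfrak{S}_{k}(x_{1},\ldots,x_{r+1})x_{i}^{r+1-k}=\prod_{j=1}^{r+1}(x_{i}-x_{j})=0$ at each $l_{p}^{i}$, with the algebra-homomorphism property left as a routine check. Your framing via factoring the tautological map $H^{*}(\mathcal{G})[\kappa]\to H^{*}(\mathcal{P}(\xi))$ through the quotient is just a slightly more formal packaging of the same computation.
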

\begin{proof}
We claim that $\mu(\sum_{s=0}^{r+1} (-1)^{s}c_{s} \kappa^{r+1-s})=0$ holds.
Let $x:=(p,j)\in \mathcal{V}^{P(\xi)}$ for $p\in \mathcal{V}$ and $j\in [r+1]$.
Notice that the identities $t(x)=\xi_{p}^{j}$ and $c_{s}(x)=\mathfrak{S}_{s}(\xi_{p}^{1},\ldots,\xi_{p}^{r+1})$ hold by the definitions above.
Therefore, we may conduct the following computation:
\begin{align*}
\mu\left(\sum_{s=0}^{r+1} (-1)^{s}c_{s} \kappa^{r+1-s}\right)(x)=&
\sum_{s=0}^{r+1} (-1)^{s}c_{s}(x) t(x)^{r+1-s}\\
=&\sum_{s=0}^{r+1}(-1)^{s}\mathfrak{S}_{s}(\xi_{p}^{1},\ldots, \xi_{p}^{r+1})(\xi_{p}^{j})^{r+1-s} \\
=&\prod_{s=1}^{r+1}\left(\xi_{p}^{j}-\xi_{p}^{s}\right) =0\quad ({\rm by}\ j\in [r+1]).
\end{align*}
Thus, $\mu$ is well defined.
It is easy to check that $\mu$ is an $H^{*}(\Gamma)$-algebra homomorphism.
The proof is complete.
\end{proof}

The following lemma is also needed to prove the main theorem:
\begin{lemma}
\label{free-module}
As an $H^{*}(\Gamma)$-module, there is the following isomorphism:
\begin{align*}
H^{*}(\Gamma)[\kappa]\big/\biggl( \sum_{s=0}^{r+1} (-1)^{s}c_{s}^{T}(\xi) \cdot \kappa^{r+1-s} \biggr)\simeq
H^{*}(\Gamma)\oplus H^{*}(\Gamma)\kappa\oplus \cdots \oplus H^{*}(\Gamma)\kappa^{r}.
\end{align*}
\end{lemma}
\begin{proof}
By the relation of the ring structure on the left-hand side,
the element $\kappa^{r+1}$ can be written by the unique linear combination of $1, \kappa,\ldots, \kappa^{r}$ with the coefficient in $H^{*}(\Gamma)$.
This also shows that any elements in the left-hand side can be always written as the following presentation:
\begin{align*}
f_{0}+f_{1}\kappa+\cdots +f_{r}\kappa^{r}
\end{align*}
for some elements $f_{0},\ldots, f_{r}\in H^{*}(\Gamma)$.
Since there is no $\kappa^{r+1}$ term,
it is easy to see that this presentation is unique.
This proves the statement.
\end{proof}

\subsection{The graph equivariant cohomology on the fiber}
\label{sect:5.3}

To prove the main theorem, we will use the result \cite[Theorem 3.5]{GSZ} for the integer coefficient, see Corollary~\ref{Q_k}.
Prior to this, we prove Lemma~\ref{technical_lemm} that verifies the assumptions in order to apply \cite[Theorem 3.5]{GSZ}, also see the Leray-Hirsch theorem (e.g.~\cite[Theorem 4D.1]{H})
in the case of ordinary equivariant cohomology on the manifold.

To state Lemma~\ref{technical_lemm}, we prepare some notations.
Take a vertex $p\in \mathcal{V}$.
Define $P_{p}(\xi)$ as the subgraph $(\mathcal{V}^{P(\xi)}_{p},\mathcal{E}^{P(\xi)}_{p})$ of $P(\xi)$ which consists of
\begin{description}
\item[vertex] the set of vertices $\mathcal{V}_{p}^{P(\xi)}:=[r+1]_{p}$;
\item[edge] the set of edges $\mathcal{E}_{p}^{P(\xi)}:=\{(p,jk)\ |\ j,k\in [r+1]_{p}\}$, i.e., the vertical edges on $p$.
\end{description}
By the assumption that $\Pi(\xi)$ is a GKM graph, it follows from
Theorem~\ref{cong_rel_for_canonical_connection} that by restricting the axial function and the connection on $P_{p}(\xi)$ we can define the GKM subgraph $\Pi_{p}(\xi)$ whose underlying graph is $P_{p}(\xi)$.
We call the GKM graph $\Pi_{p}(\xi)=(P_{p}(\xi),\alpha_{p}^{P(\xi)})$ the {\it fiber} of $\Pi(\xi)$ on $p\in \mathcal{V}$, where $\alpha_{p}^{P(\xi)}:\mathcal{E}_{p}^{P(\xi)}\to (\mathfrak{t}_{\mathbb{Z}}^{n})^{*}$ is the restriction of the axial function $\alpha^{P(\xi)}$ to $\mathcal{E}_{p}^{P(\xi)}$.
Here, we may omit the connection on $\Pi_{p}(\xi)$ because
$P_{p}(\xi)$ is the complete subgraph with $r+1$ vertices and the standard connection on $P_{p}(\xi)$ is induced from the usual connection on the complete graph (i.e., the $1$-skeleton of the $r$-dimensional simplex).
Therefore, the graph equivariant cohomology ring $H^{*}(\Pi_{p}(\xi))$ is well defined.

In the following proof of Lemma~\ref{technical_lemm}, we use the inductive argument for vertices  (see e.g.~\cite[Lemma 4.4]{MMP} or \cite[Lemma 5.6]{KU}).

\begin{lemma}
\label{technical_lemm}
As an $H^{*}(BT)$-module, $H^{*}(\Pi_{p}(\xi))$ is generated by $\{1, t_{p},t_{p}^{2},\ldots, t_{p}^{r}\}$, where  $t_{p}:=t|_{\mathcal{V}_{p}^{P(\xi)}}$ is the well-defined restriction of $t=c_{\xi}\in H^{*}(\Pi(\xi))$ to the subgraph $P_{p}(\xi)$.
Namely, for every element $X\in H^{*}(\Pi_{p}(\xi))$, there exist polynomials $Q_{0}(p),Q_{1}(p),\ldots, Q_{r}(p)\in H^{*}(BT^{n})\subset H^{*}(\Pi_{p}(\xi))$ such that
\begin{align}\label{eq:module_dec}
X=\sum_{s=0}^{r}Q_{s}(p)t_{p}^{s}.
\end{align}
\end{lemma}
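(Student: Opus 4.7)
The plan is to proceed by induction on $r$, where $r+1$ is the number of vertices of the complete graph $P_{p}(\xi)\cong K_{r+1}$. The base case $r=0$ is immediate, because then $\mathcal{P}_{p}(\xi)$ consists of the single vertex $l_{p}^{1}$ and $H^{*}(\mathcal{P}_{p}(\xi))=H^{*}(BT^{n})$ is already generated by $1=t_{p}^{0}$.

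For the inductive step, abbreviate $\alpha_{i}:=\alpha^{\xi}(p,l_{p}^{i})$ and let $\mathcal{P}_{p}(\xi)'$ denote the face of $\mathcal{P}_{p}(\xi)$ spanned by $\{l_{p}^{1},\ldots,l_{p}^{r}\}$, which is again a complete GKM subgraph of the form considered in the lemma for rank $r$ (cf.\ Remark~\ref{face of projectivization}). Restriction of functions yields an $H^{*}(BT^{n})$-linear map $\pi\colon H^{*}(\mathcal{P}_{p}(\xi))\to H^{*}(\mathcal{P}_{p}(\xi)')$, and since the restriction of $t_{p}$ to $\mathcal{P}_{p}(\xi)'$ is the corresponding tautological class on the smaller fibre, the inductive hypothesis applied to $\mathcal{P}_{p}(\xi)'$ implies that $\pi$ is surjective: every generator of the target is the image of a power of $t_{p}$.

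Next I would identify $\ker\pi$. Put $\omega:=\prod_{j=1}^{r}(t_{p}-\alpha_{j})\in H^{*}(\mathcal{P}_{p}(\xi))$, a polynomial of degree $r$ in $t_{p}$. Since $t_{p}(l_{p}^{k})=\alpha_{k}$, direct evaluation gives $\omega(l_{p}^{k})=0$ for $k\le r$ and $\omega(l_{p}^{r+1})=\prod_{j=1}^{r}(\alpha_{r+1}-\alpha_{j})$, so $\omega\in\ker\pi$. Conversely, given $g\in\ker\pi$, the congruence relations along the vertical edges $e_{p}^{r+1,j}$ with labels $\alpha_{r+1}-\alpha_{j}$ force $(\alpha_{r+1}-\alpha_{j})\mid g(l_{p}^{r+1})$ for all $1\le j\le r$ in the UFD $H^{*}(BT^{n})$. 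By the GKM hypothesis on $\mathcal{P}(\xi)$, the linear forms $\alpha_{r+1}-\alpha_{j}$ are pairwise non-proportional, hence pairwise coprime, so their product divides $g(l_{p}^{r+1})$ and there is a unique $Q_{r}\in H^{*}(BT^{n})$ with $g=Q_{r}\omega$. Therefore $\ker\pi=H^{*}(BT^{n})\cdot\omega$.

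Combining the two steps, given any $X\in H^{*}(\mathcal{P}_{p}(\xi))$, the inductive hypothesis applied to $\pi(X)$ yields $Q_{0},\ldots,Q_{r-1}\in H^{*}(BT^{n})$ with $\pi\bigl(X-\sum_{k=0}^{r-1}Q_{k}t_{p}^{k}\bigr)=0$; the kernel description then supplies $Q_{r}\in H^{*}(BT^{n})$ with $X-\sum_{k=0}^{r-1}Q_{k}t_{p}^{k}=Q_{r}\omega$, and expanding $\omega$ as a polynomial of degree $r$ in $t_{p}$ and re-collecting terms yields the required expression \eqref{eq:module_dec}. The main obstacle is the kernel computation, specifically the coprimality step that upgrades the individual divisibilities along the $r$ edges meeting $l_{p}^{r+1}$ to a single divisibility by their product; this is precisely where the pairwise linear independence furnished by the GKM hypothesis on $\mathcal{P}(\xi)$ is essential.
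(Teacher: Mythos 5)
Your proof is correct and rests on exactly the same mechanism as the paper's: the congruence relations along the vertical edges of the complete fibre graph, combined with the pairwise linear independence (hence, as you note, coprimality) of the labels $\alpha^{\xi}(p,l_{p}^{i})-\alpha^{\xi}(p,l_{p}^{j})$ guaranteed by the GKM hypothesis, which upgrades vanishing at a set of vertices to divisibility by the product $\prod_{q}\bigl(t_{p}-\alpha^{\xi}(p,l_{p}^{q})\bigr)$. The only difference is organizational: you package the elimination as an induction on $r$ via the restriction map to the facet on $\{l_{p}^{1},\ldots,l_{p}^{r}\}$ and a computation of its kernel $H^{*}(BT^{n})\cdot\omega$, whereas the paper runs the same Newton-interpolation iteration explicitly inside the fixed graph, subtracting the correction terms $Y_{j-1}(l_{p}^{j})\prod_{q=1}^{j-1}\bigl(t_{p}-\alpha^{\xi}(p,l_{p}^{q})\bigr)$ one vertex at a time.
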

\begin{proof}
Take an element $X\in H^{*}(\Pi_{p}(\xi))$.
Recall $\mathcal{V}_{p}^{P(\xi)}=[r+1]_{p}=\{(p,j)\ |\ j\in [r+1]\}$.
Consider the monomorphism $\iota\colon H^{*}(BT^{n})\to H^{*}(\Pi_{p}(\xi))$ defined by the constant functions.
One has  $X(p,1)\in {\rm Im}~\iota\subset H^{*}(\Pi_{p}(\xi))$ because of $X(p,1)\in H^{*}(BT^{n})$.

We first put $X_{1}:=X-X(p,1)$.
Then the element $X_{1}\in H^{*}(\Pi_{p}(\xi))$ satisfies $X_{1}(p,1)=0$.
By definition of the fiber $\Pi_{p}(\xi)$,
for every $(p,j)$ ($j=2,\ldots, r+1$), it follows from the congruence relations that one has
\begin{align*}
X_{1}(p,j)\equiv 0 \mod \xi_{p}^{1}-\xi_{p}^{j}=t_{p}(p,1)-t_{p}(p,j).
\end{align*}
In other words, for every $j=2,\ldots, r+1$
there exists an element $Y_{1}(p,j)\in H^{*}(BT^{n})$ such that
\begin{align*}
X_{1}(p,j)
=Y_{1}(p,j)\bigl(t_{p}(p,1)-t_{p}(p,j)\bigr).
\end{align*}
We next take the following element in $H^{*}(\Pi_{p}(\xi))$:
\begin{align*}
X_{2}:=X_{1}-Y_{1}(p,2)\bigl(t_{p}(p,1)-t_{p}\bigr),
\end{align*}
where we regard $Y_{1}(p,2), t_{p}(p,1)\in {\rm Im}~\iota\subset H^{*}(\Pi_{p}(\xi))$.
This element satisfies the equalities  $X_{2}(p,1)=X_{2}(p,2)=0$.
So, by the congruence relations, we have
\begin{align*}
X_{2}(p,j)\equiv 0 \mod t_{p}(p,l)-t_{p}(p,j),
\end{align*}
for $j=3,\ldots r+1$ and $l=1,2$.
Therefore, there exists $Y_{2}(p,j)\in H^{*}(BT^{n})$ for $j=3,\ldots, r+1$ such that
\begin{align*}
X_{2}(p,j)=Y_{2}(p,j)\prod_{l=1}^{2}\bigl(t_{p}(p,l)-t_{p}(p,j)\bigr).
\end{align*}
Note that $t_{p}(p,1)-t_{p}(p,j)$ and $t_{p}(p,2)-t_{p}(p,j)$ are linearly independent for any  $j=3,\dots,r+1$ because $\Pi(\xi)$ is a GKM graph.
Similarly, if $X_{k-1}\in H^{*}(\Pi_{p}(\xi))$ satisfies $X_{k-1}(p,l)=0$ for $l=1,\ldots, k-1$, then it follows from the congruence relations that there exists  $Y_{k-1}(p,j)\in H^{*}(BT^{n})$ for $j=k,\ldots,r+1$ such that
\begin{align*}
X_{k-1}(p,j)=Y_{k-1}(p,j)\prod_{l=1}^{k-1}\bigl(t_{p}(p,l)-t_{p}(p,j)\bigr).
\end{align*}
Therefore, if we put
\begin{align*}
X_{k}:=X_{k-1}-Y_{k-1}(p,k)\prod_{l=1}^{k-1}\bigl(t_{p}(p,l)-t_{p}\bigr)\in H^{*}(\Pi_{p}(\xi)),
\end{align*}
then one has the equality $X_{k}(p,l)=0$ for $l=1,\ldots, k$.

Put $Z_{k-1}:=Y_{k-1}(p,k)\prod_{l=1}^{k-1}\bigl(t_{p}(p,l)-t_{p}\bigr)$.
Then, $Z_{k-1}$ is an element generated by $\{1,t_{p},\ldots, t_{p}^{k-1}\}$.
Inductively, we can make
\begin{align*}
X_{r+1}=X_{r}-Z_{r}=0\in H^{*}(\Pi_{p}(\xi)).
\end{align*}
By the constructions as above, we have the equalities
\begin{align*}
X_{r+1}=X-(X(p,1)+Z_{1}+\cdots +Z_{r})=0.
\end{align*}
Since $(X(p,1)+Z_{1}+\cdots +Z_{r})$ is an element generated by $\{1,t_{p},\ldots, t_{p}^{r}\}$,
this proves the lemma.
\end{proof}

\subsection{The proof of the main theorem}
\label{sect:5.4}

By using Lemma~\ref{technical_lemm} and some modification of \cite[Theorem~3.5]{GSZ}, we have the following corollary.
\begin{corollary}
\label{Q_k}
The graph equivariant cohomology $H^{*}(\Pi(\xi))$ is a free $H^{*}(\Gamma)$-module generated by $\{1,t,\ldots,t^{r}\}$, i.e.,
\begin{align}
\label{decomposition_of_f}
H^{*}(\Pi(\xi))\simeq \bigoplus_{s=0}^{r}H^{*}(\Gamma)t^{s}.
\end{align}
\end{corollary}
\begin{proof}
Due to Lemma~\ref{technical_lemm}, we see that $1, t, t^{2},\ldots, t^{r}\in H^{2}(\Pi(\xi))$ satisfy the assumption of \cite[Theorem~3.5]{GSZ}, where $t=c_{\xi}$.
The argument in \cite[Theorem~3.5]{GSZ} is given for the real coefficients.
However,
we can also apply the similar argument of \cite[Theorem~3.5]{GSZ} for the integer coefficient.
This establishes the statement.
\end{proof}

Now we may prove that $\mu$ is an isomorphism.
\begin{lemma}
\label{isomorphism}
Suppose that $\Pi(\xi)$ is a GKM graph. Then the homomorphism $\mu$ is an isomorphism.
\end{lemma}
\begin{proof}
By Lemma~\ref{well-defined}, Lemma~\ref{free-module} and Corollary~\ref{Q_k},
the homomorphism $\mu$ induces the $H^{*}(\Gamma)$-module isomorphism between free $H^{*}(\Gamma)$-modules.
Since $\mu$ is an algebra homomorphism,
$\mu$ is an isomorphism. This establishes the statement.
\end{proof}

Consequently, Theorem~\ref{main_theorem2} follows directly from
Lemma~\ref{isomorphism}.

\subsection{An example of the computation and some remarks}
\label{sect:5.5}
In this section, by applying Theorem~\ref{main_theorem2}, we compute some graph equivariant cohomology.
By the projectivization of Figure~\ref{figure_triangle}, we have the following GKM graph:
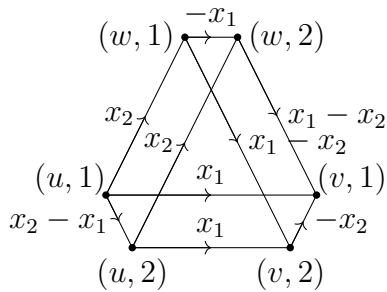
\begin{figure}[H]
\centering
\begin{tikzpicture}
\begin{scope}[xscale=0.7, yscale=0.7]
\fill (-5,-1) coordinate (u1) circle (2pt);
\node[left] at (-4.8,-0.7) {$(u,1)$};
\fill (-4.5,-2) coordinate (u2) circle (2pt);
\node[below] at (-4.5,-2) {$(u,2)$};

\fill (-1,-1) coordinate (v1) circle (2pt);
\node[right] at (-1.2,-0.7) {$(v,1)$};
\fill (-1.5,-2) coordinate (v2) circle (2pt);
\node[below] at (-1.5,-2) {$(v,2)$};

\fill (-3.5,2) coordinate (w1) circle (2pt);
\node[left] at (-3.5,2) {$(w,1)$};
\fill (-2.5,2) coordinate (w2) circle (2pt);
\node[right] at (-2.5,2) {$(w,2)$};

\draw[->] (u1)--(-4.75,-1.5);
\node[left] at (-4.75,-1.5) {$x_{2}-x_{1}$};
\draw[->] (w1)--(-3,2);
\node[above] at (-3,2) {$-x_{1}$};
\draw[->] (v2)--(-1.25,-1.5);
\node[right] at (-1.25,-1.5) {$-x_{2}$};

\draw[->] (u1)--(-4.25,0.5);
\node[left] at (-4.25,0.5) {$x_{2}$};
\draw[->] (u2)--(-3.5,0);
\node[left] at (-3.5,0) {$x_{2}$};
\draw[->] (u1)--(-3,-1);
\node[above] at (-3,-1) {$x_{1}$};
\draw[->] (u2)--(-3,-2);
\node[above] at (-3,-2) {$x_{1}$};
\draw[->] (w1)--(-2.5,0);
\node[right] at (-2.5,0) {$x_{1}-x_{2}$};
\draw[->] (w2)--(-1.75,0.5);
\node[right] at (-1.75,0.5) {$x_{1}-x_{2}$};

\draw (u1)--(u2);
\draw (v1)--(v2);
\draw (w1)--(w2);

\draw (u1)--(v1);
\draw (u2)--(v2);
\draw (v2)--(w1);
\draw (v1)--(w2);
\draw (w1)--(u1);
\draw (w2)--(u2);
\end{scope}
\end{tikzpicture}
\caption{The projectivization $\Pi(\xi)$ of the leg bundle in  Figure~\ref{figure_triangle}.}
\label{flag_mfd}
\end{figure}
By Theorem~\ref{main_theorem2}, we have that
\begin{align*}
H^{*}(\Pi(\xi))\simeq H^{*}(\Gamma)[\kappa]/\left( \kappa^{2}
-c_{1}^{T}(\xi)\cdot \kappa
+c_{2}^{T}(\xi) \right),
\end{align*}
where $\kappa=c_{\xi}\in H^{2}(\Pi(\xi))$.
In this case,
the GKM graph $\Gamma$ (see the right graph in Figure~\ref{figure_triangle}) is a torus graph.
Therefore, we can compute the graph equivaraint cohomology $H^{*}(\Gamma)$ by using \cite{MMP}, and we obtain:
\begin{align*}
H^{*}(\Gamma)\simeq \mathbb{Z}[\tau_{1},\tau_{2},\tau_{3}]/\langle \tau_{1}\tau_{2}\tau_{3} \rangle,
\end{align*}
where $\tau_{i}$ is the equivariant Thom class of the edge $e_{i}$, where $i=1,2,3$, in Figure~\ref{figure_triangle}.
Moreover, it is easy to check that there are the following equalities:
\begin{align*}
& c_{1}^{T}(\xi)=\tau_{1}+\tau_{2}+\tau_{3}=\mathfrak{S}_{1}(\tau_{1},\tau_{2},\tau_{3}); \\
& c_{2}^{T}(\xi)=\tau_{1}\tau_{2}+\tau_{2}\tau_{3}+\tau_{3}\tau_{1}=\mathfrak{S}_{2}(\tau_{1},\tau_{2},\tau_{3}).
\end{align*}
Therefore, we have the following ring structure for $H^{*}(\Pi(\xi))$:
\begin{align}
\label{ring_example}
H^{*}(\Pi(\xi))\simeq \mathbb{Z}[\tau_{1},\tau_{2},\tau_{3},\kappa]/\left(\tau_{1}\tau_{2}\tau_{3},\ \kappa^{2}-(\tau_{1}+\tau_{2}+\tau_{3})\kappa+(\tau_{1}\tau_{2}+\tau_{2}\tau_{3}+\tau_{3}\tau_{1}) \right).
\end{align}

\begin{remark}
There is the following isomorphism of algebras
\begin{align*}
H_{T^{2}}^{*}(\P(T\mathbb{C}P^{2}))\simeq H^{*}(\Pi(\xi)).
\end{align*}
This follows by theorem of \cite{GKM} (\cite{FP} over $\Z$-coefficients), because $H^{odd}(\P(T\mathbb{C} P^2))=0$.
(Namely, the last equality implies equivariant formality of the $T$-action on $\P(T\mathbb{C} P^2)$, i.e. this is a GKM manifold.)
\end{remark}

\begin{remark}
One can also prove that $\P(T\mathbb{C}P^{2})$ is $T^{2}$-equivariantly diffeomorphic to the flag manifold $\mathcal{F}l(\mathbb{C}^{3})$.
The equivariant cohomology of the $T^{2}$-action on $SU(3)/T^{2}\cong \mathcal{F}l(\mathbb{C}^{3})$ can also be computed by the well-known Borel description (see \cite{FIM} for the GKM theoretical point of view).
Notice that the $T^{2}$-aciton on $SU(3)/T^{2}$ is non-effective.
On the other hand, the ring structure given by \eqref{ring_example} corresponds to
the equivariant cohomology of the effective $T^{2}$-action on $\mathcal{F}l(\mathbb{C}^{3})$ (also see \cite[Remark 4.5]{KKLS} and the computation in \cite{KLSS}).
\end{remark}

\begin{remark}
In the case when $\Pi(\xi)$ is the GKM graph of a projectivization $\mathbb{P}(\xi)$ of a torus-equivariant complex vector bundle over a GKM manifold,
Corollary~\ref{Q_k} is nothing but the {\it Leray-Hirsch theorem}, see \cite[Theorem 4D.1]{H}.
Even though we do not assume the existence of such a geometric origin,
the module structure of $H^{*}(\Pi(\xi))$ with the $\mathbb{R}$-coefficient was proved in \cite[Theorem 3.5]{GSZ} under some conditions corresponding to Lemma~\ref{technical_lemm} similar to the classical Leray-Hirsch theorem.
Our result generalizes \cite[Theorem 3.5]{GSZ} to the $\mathbb{Z}$-coefficient by proving the existence of module generators in Lemma~\ref{technical_lemm}. 
This describes the $H^{*}(BT)$-module structure of the graph equivariant cohomology.

Furthermore, we prove Theorem~\ref{main_theorem2}, i.e., a version of the {\it Borel-Hirzebruch formula} (see \cite[15.1 (3)]{BH}) for the class of $2$-independent projectivization GKM-graphs (not necessarily realized by topological fiber bundles) with the $\mathbb{Z}$-coefficients.
If there is the geometric origin for $\Pi(\xi)$, then we obtain it by using the  {\it Borel-Hirzebruch formula} proved in \cite[15.1 (3)]{BH}.
Theorem~\ref{main_theorem2} describes the $H^{*}(BT)$-algebra structure with the $\mathbb{Z}$-coefficient of the graph equivariant cohomology.
\end{remark}

\section{Realization of the projective GKM fiber bundles by projectivizations of leg bundles}
\label{sect:6}

In this final section, we study the realization of the projective bundle from the complex vector bundle.

\subsection{Geometric realization}
\label{sect:6.1}

We first recall the non-equivariant case.
Consider a $\C P^r$-bundle $p\colon P\to M$ over a manifold $M$ with the structure group $PGL_{r+1}(\mathbb{C})=GL_{r+1}(\mathbb{C})/\mathbb{C}^{*}$, where $\mathbb{C}^{*}$ denotes the diagonal matrix.
The exact sequence of groups
\begin{align*}
\mathbb{C}^{*}\to GL_{r+1}(\mathbb{C})\to PGL_{r+1}(\mathbb{C})
\end{align*}
induces a long exact sequence of sheaf cohomologies
\begin{align*}
H^{1}(M;GL_{r+1}(\mathbb{C}))\to H^{1}(M;PGL_{r+1}(\mathbb{C}))\to H^{2}(M;\mathbb{C}^{*}).
\end{align*}
Using the isomorphism $H^{2}(M;\mathbb{C}^{*})\simeq H^{3}(M;\mathbb{Z})$ derived from the exponential sequence (referred to, e.g.~\cite{GH}),
this sequence reveals that there exists an obstruction in $H^{3}(M;\mathbb{Z})$ to determine whether the bundle $p\colon P\to M$ is induced from the projectivization $\P(\xi)$ of a rank $r+1$ complex vector bundle $\xi$.
If $M$ satisfies $H^{odd}(M;\mathbb{Z})=0$, then $P\cong \P(\xi)$ holds.
A GKM manifold $M$ is called an {\it equivariantly formal} (over $\mathbb{Z}$-coefficient in this paper, see \cite{GKM,FP}) if $H^{odd}(M;\mathbb{Z})=0$.
Therefore, we have the following proposition.
\begin{lemma}
\label{non-equivariant}
Let $M$ be an equivariantly formal GKM manifold.
Then, for any $\C P^r$-bundle $p\colon P\to M$ over $M$, there exists a rank $(r+1)$-complex vector bundle $\xi$ such that $\P(\xi)$ is isomorphic to $P$ as a $\mathbb{C}P^{r}$-bundle.
\end{lemma}

Using the result in \cite[Proposition 6.2]{Su} (also see \cite[Corollary 1.4]{HY}), we have the equivariant version of Lemma~\ref{non-equivariant} as follows:
\begin{theorem}
\label{equivariant-bundle}
Let $M$ be an equivariantly formal GKM manifold with $T^{n}$-action.
Then, for any $T^{n}$-equivariant $\C P^r$-bundle $p\colon P\to M$ over $M$, there exists a rank $(r+1)$-complex $T^{n}$-equivariant vector bundle $\xi$ such that $\P(\xi)$ is isomorphic to $P$ as a $T^{n}$-equivariant $\mathbb{C}P^{r}$-bundle.
\end{theorem}
\begin{proof}
By $H^{odd}(M;\mathbb{Z})=0$ and Lemma~\ref{non-equivariant}, there exists a rank $(r+1)$-complex vector bundle $\xi$ such that $\P(\xi)\cong P$ as a $\mathbb{C}P^{r}$-bundle.
We claim that that any such $\xi$ admits a structure of a $T^n$-equivariant vector bundle so that the equivariant bundle $\P(\xi)$ is isomorphic to $P$.

Since $M$ is compact,
by fixing a Hermitian metric on the vector bundle $\xi$, we may take the $(2r+1)$-dimensional unit sphere bundle in $\xi\to M$, say $\mathbb{S}(\xi)\to M$.
Note that the structure group of the sphere bundle $\mathbb{S}(\xi)\to M$ is isomoprhic to the unitary group $U(r+1)$.
Consider the diagonal circle subgroup $S^1$ of the structure group $U(r+1)$ for $\mathbb{S}(\xi)$.
It follows directly from the definition of the projectivization that the $S^1$-action on the fibers of $\mathbb{S}(\xi)$ is free.
This implies that $\mathbb{S}(\xi)/S^{1}\cong \P(\xi)\cong P$ hold.
Therefore, $\mathbb{S}(\xi)$ can be regarded as a principal $S^{1}$-bundle over $P$.
Since $H^{1}(\mathbb{C}P^{r};\mathbb{Z})=H^{1}(M;\mathbb{Z})=0$, we have $H^{1}(P;\mathbb{Z})=0$.
Hence, $P$ satisfies the condition in \cite[Proposition 6.2]{Su} (also see \cite[Corollary 1.4]{HY}).
Therefore, the principal $S^1$-bundle $\mathbb{S}(\xi)\to P$ admits the lifting of the $T^{n}$-action on $P$, i.e., $\mathbb{S}(\xi)$ has a $T^{n}$-action such that $\mathbb{S}(\xi)\to P$ is $T^{n}$-equivariant.
Clearly, the actions of $S^1$ and $T^n$ on $\mathbb{S}(\xi)$ commute with each other.
Therefore, $T^{n}$ also acts on $\mathbb{P}(\xi)$, and the equivariant bundle $\P(\xi)$ is isomorphic to $P$.
We denote this $T^{n}$-action on $\mathbb{P}(\xi)$ by $\varphi$.

We next claim that $\varphi$ is obtained from the projectivization of the equivariant vector bundle.
By the above argument,
the sphere bundle $\mathbb{S}(\xi)\to M$ is obtained from the composition $\rho:\mathbb{S}(\xi)\to \mathbb{P}(\xi)\to M$ and its structure group is $U(r+1)$.
Therefore, by the definition of lifting, the lifted $T$-action satisfies that the lift of $t:M\ni x\mapsto tx\in  M$ to the fiber $t^{*}:\rho^{-1}(x)=\mathbb{S}(\xi_{x})\to \rho^{-1}(tx)=\mathbb{S}(\xi_{tx})$ is in $U(r+1)$ for all $x\in M$ and $t\in T$, where $\xi_{x}\simeq \mathbb{C}^{r+1}$ is the fiber of $\xi$ over $x\in M$.
Moreover, by the above argument, we have that there is the $T^{n}$-equivariant complex line bundle $\mathbb{S}(\xi)\times_{S^{1}}\mathbb{C}\to \mathbb{P}(\xi)$, where $S^{1}$ acts on $\mathbb{C}$ by the scaler multiplication.
By removing the zero section of this line bundle, we have that $\mathbb{S}(\xi)\times_{S^{1}}\mathbb{C}^{*}\simeq \mathbb{S}(\xi)\times\mathbb{R}_{>0}\cong \xi^{\times}\to M$,
where $\mathbb{C}^{*}:=\mathbb{C}\setminus \{0\}$, $\mathbb{R}_{>0}:=\{x\in \mathbb{R}\ |\ x>0\}$. Here, $\xi^{\times}$ is the $\mathbb{C}_{0}^{r+1}(:=\mathbb{C}^{r+1}\setminus \{0\})$-bundle over $M$ by removing the zero section from $\xi$.
This shows that $\xi^{\times}$ is a $T^{n}$-equivariant $\mathbb{C}_{0}^{r+1}$-bundle over $M$.
Moreover, for each element $t\in T$, we may choose the lifting $t^{*}:\mathbb{S}(\xi_{x})\times \mathbb{R}_{>0}\to \mathbb{S}(\xi_{tx})\times \mathbb{R}_{>0}$ of the diffeomorphism $t:M\ni x\to tx\in M$ as an element in $U(r+1)\times \{id\}\subset GL_{r+1}(\mathbb{C})$.
The obtained $T^{n}$-action on $\xi^{\times}$ extends to the continuous $T^{n}$-action on $\xi$ uniquely by declaring that the zero section of $\xi\to M$ is fiberwise invariant; moreover, the lift of $t:M\ni x\mapsto tx\in  M$, say $t^{*}:\xi_{x}\to \xi_{tx}$, is linear for all $x\in M$ and $t\in T$.
Therefore, the induced $T^{n}$-action on $\mathbb{P}(\xi)=\xi^{\times}/\mathbb{C}^{*}$ equals the original $T^{n}$-action $\varphi$ on $\mathbb{P}(\xi)$ which is the projectivization of the equivariant $T^{n}$-bundle $\xi$.
This establishes the statement.
\end{proof}

\begin{remark}
\label{equivariant-bundle-remark}
Note that in Theorem~\ref{equivariant-bundle}, $P$ may not be a GKM manifold.
\end{remark}

\subsection{Combinatorial projective bundle}
\label{sect:6.2}
In this section, we study the GKM graph theoretical analogue of Theorem~\ref{equivariant-bundle}.
To do that, we shall define
the notion of a {\it projective GKM fiber bundle} which is a GKM fiber bundle in the sense of \cite{GSZ} whose fiber is the complete graph $K_{r+1}$ with the standard connection.

We first recall some of the definitions from \cite{GSZ}.
Let $P=(\mathcal{V}^{P},\mathcal{E}^{P})$ and $G=(\mathcal{V}^{G},\mathcal{E}^{G})$ be connected graphs.
A {\it graph morphism} $\pi\colon P\to G$ is defined by a set-theoretical map $\pi:\mathcal{V}^{P}\sqcup \mathcal{E}^{P}\to \mathcal{V}^{G}\sqcup \mathcal{E}^{G}$ such that
\begin{itemize}
\item for every $u\in \mathcal{V}^{P}$, $\pi(u)\in \mathcal{V}^{G}$;
\item for every $e\in \mathcal{E}^{P}$, either $\pi(i(e))=\pi(t(e))=\pi(e)\in \mathcal{V}^{G}$ or $\pi(e)\in \mathcal{E}^{G}$ with $\pi(i(e))=i(\pi(e))$.
\end{itemize}
The edge $e\in \mathcal{E}^{P}$ is called a {\it vertical} edge if $\pi(e)\in \mathcal{V}^{G}$ holds.
Otherwise $e\in \mathcal{E}^{P}$ is called a {\it horizontal} edge (Cf. Section~\ref{sect:3.1}).
For a vertex $p\in \mathcal{V}^{P}$,
let $\mathcal{H}_{p}\subset \mathcal{E}^{P}$ be the set of all horizontal edges with the initial vertex $p$, and
let $\mathcal{E}^{\perp}_{p}$ be the set of vertical edges with initial vertex $p$.
A graph morphism $\pi\colon P\to G$ is called a {\it fibration of graphs} ({\it graph fibration} for short) if the restriction map $\mathcal{H}_{p}\to star_{G}(\pi(p))$ is a bijection for every $p\in \mathcal{V}_{P}$.

\begin{definition}[\cite{GSZ}]
\label{defn:gsz1}
Let $\Pi=(P,\alpha^{P},\nabla^{P})$ and $\Gamma=(G,\alpha,\nabla)$ be GKM graphs.
A morphism $\pi\colon \Pi\to \Gamma$ of GKM-graphs is called a \textit{GKM fibration} if
\begin{enumerate}[label=(\roman*)]
\item $\pi:P\to G$ is a fibration of graphs;
\item if $\widetilde{e}\in \mathcal{E}^{P}$ is a lift of $e\in \mathcal{E}^{G}$, i.e., $\pi(\widetilde{e})=e$, then $\alpha^{P}(\widetilde{e})=\alpha(e)$;
\item the connection $\nabla^{P}$ sends vertical and horizontal edges to vertical and horizontal, respectively;
\item one has $\pi(\nabla^{P}|_{\mathcal{H}_{p}})=\nabla|_{star_{G}(\pi(p))}$.
\end{enumerate}
\end{definition}

Let $\pi\colon \Pi\to \Gamma$ be a GKM fibration.
Notice that for every vertex $q\in \mathcal{V}^{G}$ the preimage $\pi^{-1}(q)$ is a subgraph of $P$.
The restrictions of $\alpha^{P}$ and $\nabla^{P}$ of $\Pi$ to $\pi^{-1}(q)$ induce the well-defined GKM subgraph $\Pi_{q}:=(\pi^{-1}(q), \alpha^{P}|_{\pi^{-1}(q)},\nabla^{P}|_{\pi^{-1}(q)})$, where $\alpha^{P}|_{\pi^{-1}(q)}:\mathcal{E}^{\pi^{-1}(q)}\to \mathfrak{t}_{q}$ for
$\mathfrak{t}_{q}:=\mathbb{Z}\langle \alpha^{P}(e)\ |\ e\in \mathcal{E}^{\pi^{-1}(q)} \rangle\subset \mathfrak{t}$.
Here, the symbol $\mathbb{Z}\langle \alpha^{P}(e)\ |\ e\in \mathcal{E}^{\pi^{-1}(q)} \rangle$
represents the linear space over $\mathbb{Z}$ spanned by $\alpha^{P}(e)$'s.
In addition, for any edge $e\in \mathcal{E}^{G}$, define the map
\[
\Phi_{e}\colon \mathcal{V}^{\pi^{-1}(i(e))}\to \mathcal{V}^{\pi^{-1}(t(e))},\ i(\widetilde{e})\mapsto t(\widetilde{e}),
\]
where $\widetilde{e}\in \mathcal{E}^{P}$ is every lift of $e\in \mathcal{E}^{G}$.
If $\Phi_{e}$ induces an isomorphism of graphs for any $e\in \mathcal{E}^{G}$,
then the graph fibration $\pi:P\to G$ is called  a \textit{graph fiber bundle}.

\begin{definition}[\cite{GSZ}]
\label{defn:gsz2}
The GKM fibration $\pi:\Pi\to \Gamma$ is called a \textit{GKM fiber bundle} if the following conditions are satisfied:
\begin{enumerate}[label=(\roman*)]
\item the map $\pi:P\to G$ is a graph fiber bundle;
\item the map $\Phi_{e}$ is compatible with the connection $\nabla^{P}$ of $\Pi$ for every edge $e\in \mathcal{E}^{G}$;
\item the map $\Phi_{e}$ induces the isomorphism of GKM graphs from $\Pi_{i(e)}$ to $\Pi_{t(e)}$ for every $e\in \mathcal{E}^{G}$ up to a linear isomorphism $\Psi_{e}\colon \mathfrak{t}_{i(e)}\to \mathfrak{t}_{t(e)}$.
\end{enumerate}
The GKM graph $\Pi_{p}=(\pi^{-1}(p),\alpha^{P}|_{\pi^{-1}(p)},\nabla^{P}|_{\pi^{-1}(p)})$, is called the fiber of the GKM fiber bundle $\pi$ at $p\in \mathcal{V}^{G}$.
\end{definition}

\begin{remark}
By the definition of a leg bundle, see Definition~\ref{def-leg-bdl}, a leg bundle $\xi\to\Gamma$ may be regarded as a GKM fiber bundle with any fiber consisting of a single vertex $p\in \mathcal{V}^{G}$ with the $r$ (non-compact) edges $[r]_{p}$.
\end{remark}

Now we may define the projective GKM fiber bundle.
\begin{definition}
\label{general_proj_bdl}
Let $\Pi$ and $\Gamma$ be GKM graphs.
A GKM fiber bundle $\pi:\Pi\to \Gamma$ is called \textit{projective} if its fiber at some point is isomorphic to the GKM graph on the complete graph $K_{r+1}$ with vertices $[r+1]$, and the axial function on $\Pi$ satisfies the identity
\begin{equation}\label{eq:projdefinition}
\alpha(p,jk)=\alpha(p,jl)-\alpha(p,kl),\ j,k,l\in[r+1],
\end{equation}
where $\alpha$ is an axial function on $\Pi$ and $(p,jk)\in \mathcal{E}^{\pi^{-1}(p)}$
denotes the edge connecting two vertices $(p,j), (p,k)\in \mathcal{V}^{\pi^{-1}(p)}=[r+1]$.
\end{definition}

\begin{remark}
\label{remark_on_projectivization}
The connection of the fiber $K_{r+1}$ of the projective GKM fiber bundle can be chosen as the standard one, i.e. any $3$-cycle in $K_{r+1}$ is parallel transport-invariant which follows easily by \eqref{eq:projdefinition} and $2$-independence of $\Pi$.
\end{remark}

\subsection{Realization problem}
\label{sect:6.3}

Motivated by Theorem~\ref{equivariant-bundle}, we ask the following question.
\begin{problem}
Let $\pi:\Pi\to \Gamma$ be a projective GKM fiber bundle with fiber $K_{r+1}$, where
$\Pi=(P,\alpha^{P},\nabla^{P})$ and $\Gamma=(G,\alpha,\nabla)$.
Does there exist a rank $r+1$ leg bundle $\xi$ over $\Gamma$ such that $\Pi\to \Gamma$ is equal to the projectivization $\Pi(\xi)\to\Gamma$?
\end{problem}

In this section,
we shall answer to this question provided that certain conditions hold.

\begin{proposition}
\label{realization}
Let $\pi:\Pi\to \Gamma$ be a projective GKM fiber bundle with fiber $K_{r+1}$, where
$\Pi=(P,\alpha^{P},\nabla^{P})$ and $\Gamma=(G,\alpha,\nabla)$.
Assume that $\Pi$ has a $\nabla^{P}$-invariant subgraph $\Pi'$ which is a degree $1$ cover of $\Gamma$ for the projection $\pi$, i.e.,
there is the section $\sigma:\Gamma=(G,\alpha,\nabla)\to \Pi=(P,\alpha^{P},\nabla^{P})$ and we can identify $\Pi'=\Gamma$ as its image.
Then, there exists a rank $r+1$ leg bundle $\xi$ over $\Gamma$ such that $\Pi\to \Gamma$ is equal to the projectivization $\Pi(\xi)\to\Gamma$.
\end{proposition}
\begin{proof}
Let $p\in \mathcal{V}^{G}$. We may put an order of the vertices on $\pi^{-1}(p)\simeq K_{r+1}$ as
\begin{align*}
(p,1),\ldots, (p,r+1),
\end{align*}
and the edges which connecting $(p,i)$ and $(p,j)$ as $(p,ij)$. By definition of the projective GKM fiber bundle, for every edge $e\in \mathcal{E}^{G}$ with $p=i(e)$ and $q=t(e)$, there exists the isomorphism $\Phi_{e}:\Pi_{p}\to \Pi_{q}$.
So there is the bijection between their vertices; we denote this as $(p,i)\mapsto (q,\sigma_{e}(i))$ by $\sigma_{e}:[r+1]_{p}\to [r+1]_{q}$, where $[r+1]_{p}:=\{(p,i)\ |\ i=1,\ldots, r+1\}$.
Since there is a GKM subgraph $\Pi'(=\Gamma)$ in $\Pi$, we may assume that
\begin{align*}
\sigma_{e}(r+1)=r+1
\end{align*}
for all $e\in \mathcal{E}^{G}$.
Moreover, because $\Phi_{e}$ is compatible with $\nabla^{P}$, the equality $\nabla^{P}_{\widetilde{e}}(p,ij)=(q,\sigma_{e}(i)\sigma_{e}(j))$ holds for the given order of vertices $[r+1]_{p}$, where $\widetilde{e}\in \mathcal{E}^{P}$ is a lift of $e\in \mathcal{E}^{G}$ with $i(\widetilde{e})=(p,i)$.
Note that by the congruence relation of $\Pi$, we have the following relations:
\begin{align}
\label{sect6:cong}
\alpha^{P}(q,\sigma_{e}(i)\sigma_{e}(j))-\alpha^{P}(p,ij)\equiv 0\mod \alpha(e).
\end{align}

Now we may define the rank $(r+1)$ leg bundle $\xi$ over $\Gamma$. We first define $\xi$ combinatorially as follows:
\begin{itemize}
\item The vertices and edges are equal to $G$;
\item The legs over $p\in \mathcal{V}^{G}$ are $(p,j)$ for $j=1,\ldots, r+1$;
\item The collection of bijective maps $\sigma_{\xi}:=\{\sigma_{e}:[r+1]_{i(e)}\to [r+1]_{t(e)}\ |\ e\in \mathcal{E}^{G}\}$.
\end{itemize}
Then, it is easy to check that $P(\xi)=P$ as the abstract graph.

In order to define the label on $\xi$,
because the labels on edges of $\xi$ are the same with the axial functions on $\Gamma$, it is enough to define the label $\xi_{p}^{i}$ on each leg $(p,i)$.
For the fixed vertex $p\in \mathcal{V}^{G}$, put $\alpha_{p,j}:=\alpha^{P}(p,r+1j)$, $j=1,\ldots ,r$ and $p\in \mathcal{V}^{G}$.
Define
\begin{align*}
\xi_{p}^{1}:=-\alpha_{p,1},\ldots , \xi_{p}^{r}:=-\alpha_{p,r}, \xi_{p}^{r+1}:=0.
\end{align*}
Then, it is easy to check that the projectivization $\Pi(\xi)$ of $\xi$ coincides with $\Pi$.
Moreover, for $\sigma_{e}:[r+1]_{p}\to [r+1]_{q}$ (where $p:=i(e), q:=t(e)$),
the following relation holds:
\begin{enumerate}
\item if $j=r+1$, then $\sigma_{e}(j)=r+1$ and $\xi_{q}^{r+1}-\xi_{p}^{r+1}=0-0=0 \mod \alpha(e)$;
\item if $j\not=r+1$, then by \eqref{sect6:cong}
\begin{align*}
\xi_{q}^{\sigma_{e}(j)}-\xi_{p}^{j}=&-\alpha_{q,\sigma_{e}(j)}+\alpha_{p,j} \\
=&-\alpha^{P}(q,r+1\sigma_{e}(j))+\alpha^{P}(p,r+1j) \\
=&-\alpha^{P}(q,\sigma_{e}(r+1)\sigma_{e}(j))+\alpha^{P}(p,r+1j) \mod \alpha(e).
\end{align*}
\end{enumerate}
Therefore, $\xi$ is a leg bundle with $\Pi(\xi)=\Pi$.
This establishes the statement.
\end{proof}

\begin{remark}
In general, the construction of $\xi$ in the proof of Proposition~\ref{realization} from $\Pi$ does not work.
For instance, it is easy to check that we cannot define such $\xi$ for the projective bundle in Figure~\ref{flag_mfd}.
\end{remark}

\begin{remark}
For an arbitrary projective bundle $\Pi$ over a GKM graph $\Gamma$,
if one allows the labels on legs of a leg bundle $\xi$ with values in $\mathbb{Q}^k$, then we can construct a leg bundle $\xi$ such that $\Pi(\xi)=\Pi$.
We call $\xi$ a {\it rational leg bundle}.
In brief, the reason for the existence of such $\xi$ is as follows.
For a rational leg bundle $\xi$, the following definition of a leg bundle $\xi$ makes sense:
\[
\xi_{p}^{i}:=\frac{1}{r+1}\sum_{j\neq i} \alpha^{P}(p,ij),
\]
where $\alpha^{P}(p,ij)$ is the axial function of the vertical edge $(p,ij)$ of $\Pi$ over $p\in \mathcal{V}^{G}$.
Then, the following equalities hold:
\begin{align*}
\xi_{u}^{j}-\xi_{u}^{k}
&=\frac{1}{r+1} \sum_{l\neq j}\alpha^{P}(u,jl)-\frac{1}{r+1} \sum_{l\neq k}\alpha^{P}(u,kl) \\
&=\frac{1}{r+1}\bigl(\alpha^{P}(u,jk)+ \sum_{l\neq j,k}\alpha^{P}(u,jl)\bigr)-\frac{1}{r+1} \bigl(\alpha^{P}(u,kj)+\sum_{l\neq j,k}\alpha^{P}(u,kl)\bigr) \\
&=\frac{1}{r+1}\bigl(2\alpha^{P}(u,jk)+\sum_{l\neq j,k}\alpha^{P}(u,jk)\bigr)\quad (\text{because of Definition~\ref{general_proj_bdl}}) \\
&=\alpha^{P}(u,jk).
\end{align*}
This shows that $\Pi=\Pi(\xi)$.
\end{remark}

\begin{figure}[H]
\centering
\begin{tikzpicture}
\begin{scope}[xscale=1, yscale=1]
\fill (0,1) circle (1pt);
\fill (1,-1) circle (1pt);
\fill (-1,-1) circle (1pt);

\draw (0,1)--(1,-1);
\draw (0,1)--(0.5,2);
\node[right] at (0.5, 2) {$\xi_{w}^{2}=-\frac{1}{2}x_{1}$};
\draw (0,1)--(-0.5,2);
\node[left] at (-0.5, 2) {$\xi_{w}^{1}=\frac{1}{2}x_{1}$};

\draw (-1,-1)--(0,1);

\draw (1,-1)--(2,-1);
\node[above] at (1.5,-1) {$\xi_{v}^{1}=-\frac{1}{2}x_{2}$};
\draw (1,-1)--(1.5,-2);
\node[right] at (1.5, -2) {$\xi_{v}^{2}=\frac{1}{2}x_{2}$};

\draw (1,-1)--(-1,-1);
\draw (-1,-1)--(-2,-1);
\node[above] at (-2, -1) {$\xi_{u}^{1}=\frac{1}{2}(x_{1}-x_{2})$};
\draw (-1,-1)--(-1.5,-2);
\node[right] at (-1.5, -2) {$\xi_{u}^{2}=\frac{1}{2}(x_{2}-x_{1})$};




\draw[->] (2.5, 0)--(3, 0);

\fill (5,1) circle (1pt);
\fill (6,-1) circle (1pt);
\fill (4,-1) circle (1pt);

\node[left] at (4,-1) {$u$};
\node[below] at (4.2, -1.1)
{$\alpha(e_{1})=x_{1}$};
\node[above] at (4.2, -0.8)
{$\alpha(e_{2})=x_{2}$};

\node[above] at (5,1) {$w$};
\node[right] at (5.2, 0.8) {$\alpha(e_{3})=x_{1}-x_{2}$};

\node[right] at (6,-1) {$v$};

\draw (4,-1)--(6,-1);
\draw[->] (4,-1)--(5,-1);
\node[below] at (5, -1) {$e_{1}$};

\draw (4,-1)--(5,1);
\draw[->] (4,-1)--(4.5,0);
\node[left] at (4.5, 0) {$e_{2}$};

\draw (5,1)--(6,-1);
\draw[->] (5,1)--(5.5,0);
\node[right] at (5.5, 0) {$e_{3}$};

\end{scope}
\end{tikzpicture}
\caption{The rank $2$ leg bundle $\xi$, where
the projective GKM fiber bundle $\Pi$ in Figure~\ref{flag_mfd} can be obtained by the projectivization $\Pi(\xi)$.}
\label{raional_leg_bundle}
\end{figure}
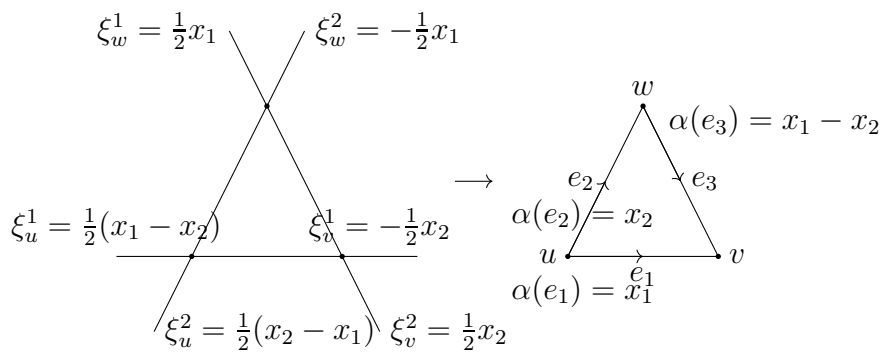

\section*{Acknowledgement}
We would like to thank the anonymous referee for telling us that our previous proof of the main theorem can be greatly reduced and also pointed out Proposition~\ref{realization}.


\end{document}